\documentclass[12pt]{article}     \usepackage{amsmath}
\usepackage{amsthm}               \usepackage{amsopn}
\usepackage{graphicx}
\usepackage{latexsym}             \usepackage{amsfonts}
\usepackage{amssymb}   		      \usepackage{relsize}           
\usepackage{subcaption}
\setlength{\topmargin}{0.0in}     \setlength{\textwidth}{5.5in}
\setlength{\oddsidemargin}{0.5in} \setlength{\textheight}{8.5in}
\usepackage{mathrsfs}
\swapnumbers
\theoremstyle{plain}
\newtheorem{theorem}{Theorem}[section]     
\newtheorem{proposition}[theorem]{Proposition}
\newtheorem{lemma}[theorem]{Lemma}

\theoremstyle{definition}
\newtheorem{definition}[theorem]{Definition}
\newtheorem{remark}[theorem]{Remark}

\DeclareRobustCommand{\rchi}{{\mathpalette\irchi\relax}}
\newcommand{\irchi}[2]{\raisebox{\depth}{$#1\chi$}}
\newcommand{\bigrchi}{\mathlarger{\mathlarger{\rchi}}}
\newcommand{\bcdot}{\boldsymbol{\cdot}}
\newcommand{\R}{\rm I \! R}
\newcommand{\newsec}[2]{ \section{#1} \label{sec-#2}  
                         \setcounter{equation}{0}     
                         \setcounter{theorem}{0} } 
\begin{document} 
\numberwithin{equation}{section}


\title{ Properties of Mean Value Sets: Angle Conditions, Blowup Solutions, and Nonconvexity}
\author{Niles Armstrong\\
\normalsize}

\maketitle

\begin{abstract}
We study the mean values sets of the second order divergence form elliptic operator with principal coefficients defined as $$a^{ij}_k(x):= \begin{cases}
\alpha_k \delta^{ij}(x)  &x_n>0 \\
\beta_k \delta^{ij}(x) &x_n<0. 
\end{cases}$$
In particular, we will show that the mean value sets associated to such an operator need not be convex as $\alpha_k$ and $\beta_k$ converge to 1. This example then leads to an example of nonconvex mean value sets for smooth $a^{ij}(x)$. 
\end{abstract}


\newsec{Introduction}{Intro}
	The mean value theorem has been one of the most important tools in the study of harmonic functions.  Due to this fact an analog for so called L-harmonic functions, where L is a generalized second order elliptic divergence form operator, is immediately of great interest.  One of the first such analogs can be found in \cite{LSW}, where a mean value theorem for such operators with no lower order terms was proved.  Using Equation 8.3 in their paper you can derive the following 
\begin{theorem}[Littman-Stampacchia-Weinerger]
Let $u$ be an $L$-subharmonic function on $\Omega$ with $u=0$ on $\partial \Omega$ and define
$$I(y;S):=\frac{S}{2} \int_{\frac{1}{S} \leq G \leq \frac{3}{S}} u(x) a^{ij}(x) D_{x_i}G(x,y) D_{x_j}G(x,y)dx$$
where $G(x,y)$ is the Green's function for $L$ on $\Omega$.  Then we have
$$u(y) \leq I(y;R) \leq I(y;S)  \text{ for all } 0<R<S.$$
\end{theorem}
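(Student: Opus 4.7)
The plan is to derive an integral identity expressing $u(y)$ in terms of a surface integral over the level sets of $G$, and then to use monotonicity of the resulting quantity in the level parameter together with the coarea formula. For $t>0$ set $D_t := \{x \in \Omega : G(x,y) > t\}$ and define
$$h(t) := \int_{\{G = t\}} u(x) \, \frac{a^{ij}(x) \, D_i G(x,y) \, D_j G(x,y)}{|\nabla G(x,y)|} \, d\sigma.$$
Applying Green's second identity (the divergence-form integration by parts) to $u$ and the test function $G - t$ on $D_t$, noting that $G - t$ vanishes on the level-set boundary and $u$ vanishes on $\partial\Omega$, together with the defining property $-L G(\cdot,y) = \delta_y$, should yield the key identity
$$h(t) = u(y) + \int_{D_t} (G - t) \, L u \, dx.$$

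Since $u$ is $L$-subharmonic, $Lu \geq 0$ distributionally, and $G-t \geq 0$ on $D_t$, so the identity gives $h(t) \geq u(y)$. The same identity shows that $h$ is nonincreasing in $t$, because as $t$ grows both the factor $(G-t)$ and the domain $D_t$ shrink, making the integral term nonincreasing. By the coarea formula,
$$I(y;S) \;=\; \frac{S}{2}\int_{1/S}^{3/S} h(t)\,dt \;=\; \frac{1}{2}\int_{1}^{3} h(\sigma/S)\,d\sigma,$$
after the substitution $t = \sigma/S$. Since $h$ is nonincreasing and $\sigma/S$ decreases with $S$ for each fixed $\sigma \in [1,3]$, the integrand is pointwise nondecreasing in $S$, which gives $I(y;R) \leq I(y;S)$. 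The pointwise bound $h \geq u(y)$ then gives $u(y) \leq I(y;R)$, completing the chain.

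The main obstacle I anticipate is rigorously justifying the integration by parts: $a^{ij}$ is only bounded measurable, so $G$ lacks classical regularity and is singular at $y$. My plan is to excise a small ball $B_\varepsilon(y)$ and work on $D_t \setminus \overline{B_\varepsilon(y)}$, where $G \in H^1$, and to restrict to the set of $t$ for which $\{G = t\}$ is a regular level set (a full-measure set by Sard's theorem paired with the coarea formula); passing to the limit $\varepsilon \to 0$, the inner boundary contribution concentrates to yield the $u(y)$ term, in analogy with the Dirac mass at the pole of a fundamental solution. Since the final inequalities are obtained by integration in $t$, an almost-everywhere version of the identity is sufficient. A smaller but related point is ensuring the weak formulation $\int a^{ij} D_j u \, D_i \varphi \, dx \leq 0$ for $\varphi \geq 0$ applies with $\varphi = (G-t)_+$; this follows from standard truncation/approximation of $\varphi$ by compactly supported $H^1_0$ functions.
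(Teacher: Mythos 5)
The paper itself contains no proof of this statement: it is quoted as background from Littman--Stampacchia--Weinberger, with only the remark that it follows from Equation 8.3 of \cite{LSW}, so there is no in-paper argument to compare yours against. Your sketch is, in outline, exactly the classical level-set derivation that underlies the LSW formula, and the skeleton is correct: with $h(t)$ the weighted surface integral over $\{G=t\}$, the coarea formula gives $I(y;S)=\frac{S}{2}\int_{1/S}^{3/S}h(t)\,dt=\frac12\int_1^3 h(\sigma/S)\,d\sigma$, the identity $h(t)=u(y)+\int_\Omega (G-t)_+\,dLu$ with the Riesz measure $Lu\ge 0$ gives $h\ge u(y)$, and monotonicity of $h$ is best phrased through the difference $h(t)-h(s)=\int\bigl[(G-t)_+-(G-s)_+\bigr]\,dLu\ge 0$ for $t<s$ (this form survives even when $\int_{D_t}(G-t)\,dLu=+\infty$, e.g.\ $u=-G(\cdot,y)$, where your identity degenerates to $-\infty+\infty$ but the conclusion is trivially true since $u(y)=-\infty$). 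The substitution $t=\sigma/S$ then yields $u(y)\le I(y;R)\le I(y;S)$ precisely as you say; the normalization $\frac{S}{2}\cdot\frac{2}{S}=1$ checks out, and the hypothesis $u=0$ on $\partial\Omega$ plays no essential role in this part since $D_t\subset\subset\Omega$ for $t>0$.

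Two points in your justification plan need a better device than the one you name. First, Sard's theorem is not available: with only bounded measurable $a^{ij}$, $G(\cdot,y)$ is merely a $W^{1,2}$ (De Giorgi--Nash continuous) function away from the pole, so ``regular level sets'' make no sense; instead, the a.e.-$t$ statements should come directly from the coarea formula for Sobolev functions, which also automatically puts the weight $a^{ij}D_iG\,D_jG/|\nabla G|$ on $\{G=t\}\cap\{|\nabla G|>0\}$. Second, and more substantively, the claim that the inner boundary term on $\partial B_\varepsilon(y)$ ``concentrates to yield $u(y)$'' is not automatic: the conormal flux density $a^{ij}D_iG\,\nu_j$ has total mass $1$ but no pointwise sign for general measurable coefficients, and an $L$-subharmonic $u$ need not be continuous at $y$, so you cannot pass to the limit by continuity of $u$ against a signed, possibly wildly oscillating density. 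The standard repair is to avoid the excised ball altogether: truncate, setting $G_t:=\min(G(\cdot,y),t)\in W^{1,2}_0(\Omega)$, and observe that $\mu_t:=-LG_t$ is a nonnegative measure of total mass $1$ supported on $\{G=t\}$ (minimum of two supersolutions), with $h(t)=\int u\,d\mu_t$ for a.e.\ $t$; then $h(t)\ge u(y)$ follows by pairing $\mu_t$ with the local Riesz decomposition of $u$ near $y$ (potential of $Lu$ plus an $L$-harmonic part), which is also where the precise meaning of the value $u(y)$ is pinned down. With those replacements your argument goes through and is, in substance, the derivation LSW's Equation 8.3 encodes.
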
 
While this formula does provide us with a integral definition of $u(y)$ like the mean value theorem for harmonic functions does, it also has a number of issues.  First, it is not a simple average due to the presence of weights and indeed these weights involve derivatives of the Green's functions, which may be nontrivial to estimate.  Second, it is an average over level sets of the Green's function instead of an average over some nice set containing $y$.

The following theorem, which was stated by Caffarelli in \cite{C} and later proved in \cite{BH1} (still assuming no lower order terms), provides a simpler formula that is much closer to the original mean value theorem.
\begin{theorem} \label{MVT}
Let $L$ be any divergence form elliptic operator with ellipticity constants $\lambda$, $\Lambda$.  For any $x_0 \in \R^n$, there exists an increasing family $D_R(x_0)$ which satisfies the following:
\begin{enumerate}
\item $B_{cR}(x_0) \subset D_R(x_0) \subset B_{CR}(x_0)$, with $c$, $C$ depending only on $n$, $\lambda$, and $\Lambda$.
\item For any L-subharmonic function $v$ and $0<R<S$, we have
\begin{equation}
v(x_0) \leq \frac{1}{|D_R(x_0)|} \int_{D_R(x_0)} v \leq \frac{1}{|D_S(x_0)|} \int_{D_S(x_0)} v.
\end{equation}
\end{enumerate}

Finally, the sets $D_R(x_0)$ are noncontact sets of the following obstacle problem:

\begin{equation} \label{obprob1}
\begin{cases}
Lu=-\bigrchi_{\{u<G(\cdot,x_0)\}}R^{-n} &\text{in} \quad B_M(x_0) \\

u \leq G(\cdot, x_0) &\text{in} \quad B_M(x_0) \\

u=G(\cdot,x_0) &\text{on} \quad \partial B_M(x_0)
\end{cases}
\end{equation}
where $B_M(x_0) \subset \R^n$ and $M>0$ is sufficiently large and $G(x,y)$ is the Green's function for $L$ on $\R^n$.
\end{theorem}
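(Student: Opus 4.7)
I would construct the family $D_R(x_0)$ directly from the obstacle problem \eqref{obprob1}. For $M$ sufficiently large, standard variational theory (Kinderlehrer--Stampacchia applied to the minimization of $\tfrac12\!\int a^{ij}D_i u D_j u - R^{-n} u\,dx$ over $\{u \leq G,\ u-G \in H^1_0(B_M)\}$) yields a unique solution $u_R$ with the optimal $C^{1,1}_{\mathrm{loc}}$ regularity of the obstacle problem. Set
\[
D_R(x_0) := \{u_R < G(\cdot, x_0)\}, \qquad w_R := G(\cdot, x_0) - u_R \geq 0.
\]
Then $w_R$ is supported in $\overline{D_R}$, which I take to be compactly contained in $B_M$ by choosing $M$ large. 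Distributionally,
\[
L w_R \;=\; -\delta_{x_0} + R^{-n}\bigrchi_{D_R},
\]
and testing this against the constant $1$ (the boundary flux vanishes since $\nabla w_R \equiv 0$ near $\partial B_M$) yields the area formula $|D_R| = R^n$.

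For the mean value inequality, fix any $L$-subharmonic $v$. Because $w_R \in H^1_0(B_M)$ and $L$ is symmetric, double integration by parts gives
\[
-v(x_0) + R^{-n}\!\int_{D_R}\! v\,dx \;=\; \int_{B_M}\! v\, L w_R\,dx \;=\; \int_{B_M}\! (L v)\, w_R\,dx \;\geq\; 0,
\]
which rearranges to $v(x_0) \leq |D_R|^{-1}\int_{D_R} v$ and also yields the identity
\[
\frac{1}{|D_R|}\int_{D_R} v \;=\; v(x_0) \;+\; \int_{B_M} (Lv)\, w_R\, dx,
\]
to be used below. For the size bounds $B_{cR} \subset D_R \subset B_{CR}$, the outer inclusion combines the area formula with nondegeneracy of $w_R$ at the free boundary, while the inner inclusion follows from the blowup of $G$ at $x_0$ and a barrier argument, both with constants depending only on $n$, $\lambda$, $\Lambda$.

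The key new step is monotonicity $D_R \subset D_S$ for $R < S$, which I would reduce to showing $u_R \geq u_S$ pointwise (note the \emph{reversed} direction, forced by $R^{-n} > S^{-n}$). Suppose $\Omega := \{u_R < u_S\}$ is nonempty. On $\Omega$, $u_R < u_S \leq G$, so $\Omega \subset D_R$; a case check on whether $x \in D_S$ gives in both subcases
\[
L(u_R - u_S) \;=\; -R^{-n} + S^{-n}\bigrchi_{D_S} \;\leq\; S^{-n} - R^{-n} \;<\; 0 \quad \text{on } \Omega.
\]
Thus $u_R - u_S$ is $L$-superharmonic on $\Omega$ with zero boundary values but strictly negative in the interior, contradicting the minimum principle. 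Hence $u_R \geq u_S$, whence $w_R \leq w_S$ and $D_R \subset D_S$; inserting $w_R \leq w_S$ into the identity above delivers the monotonicity of averages. The main technical obstacle throughout is justifying the distributional integration by parts and the minimum principle for merely bounded measurable $a^{ij}$; both rest on the optimal $C^{1,1}$ regularity of the obstacle-problem solution together with the fact that its free boundary has zero measure, which is the most delicate standard input.
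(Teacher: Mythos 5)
This theorem is not actually proved in the paper under review: it is quoted from Caffarelli \cite{C} and proved in \cite{BH1}, and your outline is essentially that proof. The architecture is right and matches the cited argument: solve the obstacle problem with obstacle $G(\cdot,x_0)$, read off the distributional identity $Lw_R=-\delta_{x_0}+R^{-n}\bigrchi_{D_R}$ (giving $|D_R|=R^n$ by testing with a cutoff that is $1$ on the support of $w_R$), test against an $L$-subharmonic $v$ for the sub-mean-value inequality, prove $u_R\ge u_S$ for $R<S$ by your comparison/minimum-principle argument on $\{u_R<u_S\}$ (hence $w_R\le w_S$ and $D_R\subset D_S$), and pass to monotonicity of the averages through the identity $\frac{1}{|D_R|}\int_{D_R}v=v(x_0)+\int (Lv)\,w_R$. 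All of these steps are sound.

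The one genuine problem is your regularity input. You assert that the variational solution has "optimal $C^{1,1}_{\mathrm{loc}}$ regularity" and you state at the end that the integration by parts and the minimum principle rest on this. In the generality of the theorem --- $a^{ij}$ merely bounded measurable with ellipticity constants $\lambda,\Lambda$ --- optimal $C^{1,1}$ regularity is false; indeed, for the operators studied in this very paper the normal derivative jumps across the interface (Lemma \ref{TCon}), so solutions are not even $C^1$ there. Fortunately your argument does not actually need it, and you should restate the technical inputs accordingly: the distributional identities and the testing need only the weak $W^{1,2}$ formulation together with continuity of $u_R$ (De Giorgi--Nash--Moser), and the weak maximum/minimum principle holds for bounded measurable coefficients; the free boundary having measure zero is likewise not needed for your identities, since the equation in (\ref{obprob1}) is already written with $\bigrchi_{\{u<G\}}$. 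What must be supplied in this low-regularity setting are: (i) $C^\alpha$ continuity so that $\{u_R<u_S\}$ is open and $(u_R-u_S)^-\in W^{1,2}_0$ of that set; (ii) quadratic growth and nondegeneracy estimates with constants depending only on $n,\lambda,\Lambda$ --- proved in \cite{BH1} without $C^{1,1}$ --- together with the Littman--Stampacchia--Weinberger bounds $c\,|x-x_0|^{2-n}\le G(x,x_0)\le C\,|x-x_0|^{2-n}$ from \cite{LSW}, which is precisely where the claimed dependence of $c,C$ only on $n,\lambda,\Lambda$ in the inclusions $B_{cR}\subset D_R\subset B_{CR}$ comes from; and (iii) the truncation of $G$ near its singularity (Remark \ref{bndry}), since $G\notin W^{1,2}$ near $x_0$ and otherwise the constraint set $\{u\le G,\ u-G\in W^{1,2}_0(B_M)\}$ is not well defined. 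With those substitutions your outline is the proof given in \cite{BH1}.
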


\begin{definition}
We defined the mean value sets associated to an operator $L$ to be the sets $D_R(x_0)$ in Theorem \ref{obprob1}.  When it is clear we may simply refer to these sets as the mean value sets.
\end{definition}

\begin{remark} \label{bndry}
Throughout the paper $u=g$ on $\partial \Omega$ we take to be interpreted as $u-g \in W^{1,2}_0(\Omega)$, but we frequently have a technical issue because $g$ will often be a Green's function which does not belong to $W^{1,2}$.  On the other hand, because our Green's functions belong to $W^{1,2}$ away from the singularity which in turn lies in the interior, we can follow \cite{BH1} and truncate $G$ near the singularity.  (See \cite{BH1} for details about this technicality.)
\end{remark}

Note the key difference between the MVT for the Laplacian and Theorem \ref{MVT} is the appearance of the sets $D_R(x_0)$ as apposed to $B_R(x_0)$.  Initially all that was known about these mean value sets $D_R(x_0)$ can be seen in the previous theorem along with Corollary 3.10 in \cite{BH1} which states that $\partial D_R(x_0)$ is porous, so in particular has Hausdorff dimension strictly less than $n$.  More recently the following three properties of these sets were shown in \cite{AB}:
 
\begin{enumerate}
\item If $y_0 \neq x_0$ then there exists an $R>0$ such that $y_0 \in \partial D_R(x_0)$.
\item Assume $y_0 \in \partial D_R(x_0)$ and take $c$ to be as in Theorem \ref{MVT}. For any $h\in(0,1/2),$ the density 
$$\frac{|D_R(x_0) \cap B_{chR}(y_0)|}{|B_{chR}(y_0)|}$$
is bounded from below by a positive constant depending only on $n$, $\lambda$, and $\Lambda$.
\item For any $x_0 \in \R^n$ and for any $R>0$ such that $B_{CR} \subset \R^n$, the set $D_R(x_0)$ has exactly one component.
\end{enumerate}
At the same time Theorem \ref{MVT} was extended to the Laplace-Beltrami operator on Riemannian manifolds in \cite{BBL}.

	Additionally the topology and geometry of these mean value sets is largely still unknown.  As in \cite{AB} one can ask if such sets will be convex, star-shaped, or homeomorphic to a ball.  However, what might be more reasonable is to ask for what types of operators, L, would such sets have a given property.  In this way we aim to study a very particular type of operators.  Namely operators whose principal coefficients are defined as
$$a^{ij}(x):= \begin{cases}
\alpha \delta^{ij}(x)  &x_n>0 \\
\beta \delta^{ij}(x) &x_n<0 
\end{cases} $$
for some $\alpha$, $\beta>0$.  We have two main reasons for studying such operators.  First these operators appear in the study of composite materials where one has two different constituent materials.  (See \cite{LV}).  Secondly these $a^{ij}(x)$ have, what we view to be, the simplest type of discontinuity for such coefficients.  Indeed, we thought these operators to be the simplest in which computing the mean value sets would be nontrivial, but still possible.  In fact if $\alpha$ is very close to $\beta$ such an operator would be very close to the Laplacian.  Hence, it is reasonable to assume the mean value sets would be very close to Euclidean balls.  This intuition does in fact turn out to be true. In particular, in Theorem \ref{conv} we show that 
$$|D_R(x_0) \, \Delta \, B_R(x_0)| \rightarrow 0 \text{ as } \alpha, \, \beta \rightarrow 1. $$
On the other hand, this fact makes our main result somewhat surprising, where we show that as $\alpha$ tends towards $\beta$  almost every $R$ such that $\partial D_R(x_0)$ crosses the interface, $D_R(x_0)$ will be nonconvex.  One may think that this behavior is due to the discontinuous nature of the principal coefficients, but this idea is incorrect.  In fact we will prove the existence of nonconvex mean value sets corresponding to operators whose principal coefficients are smooth on all of $\R^n$.  Moreover, this suggests that convexity may only be possible for operators which are the Laplacian after a linear change of variables.

\begin{theorem} \label{nonconv}
Let $D_{R;k}(x_0,y_0) \subset \R^2$ be as in Theorem \ref{MVT} where the principal coefficients of $L_{\alpha, \beta}$ are defined as
$$a^{ij}_k(x,y):= \begin{cases}
\alpha_k \delta^{ij}(x,y)  &y>0 \\
\beta_k \delta^{ij}(x,y) &y<0 
\end{cases}$$
with $\alpha_k$ and $\beta_k$ converging to 1 as $k\rightarrow\infty$. Then for all most every choice of $y_0$, such that $\partial D_{R;k}(x_0,y_0)$ eventually crosses the interface, there exists a constant $K>0$ such that $D_{R;k}(x_0,y_0)$ is nonconvex for all $k > K$.
\end{theorem}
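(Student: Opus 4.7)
My plan is to combine the near-Euclidean convergence of Theorem \ref{conv} with a blowup analysis that pins down the angles $\partial D_{R;k}$ makes with the interface, and then deduce nonconvexity from an angle mismatch.

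\textbf{Step 1 (Convergence of crossings).} Theorem \ref{conv} gives $|D_{R;k}(x_0,y_0)\,\triangle\,B_R(x_0,y_0)|\to 0$, and the uniform density estimate of \cite{AB} (property (2) in the introduction) upgrades this to Hausdorff convergence $\partial D_{R;k}\to\partial B_R$. Whenever $|y_0|<R$ the limit circle crosses $\{y=0\}$ transversally at the two points $q^\pm=(x_0 \pm \sqrt{R^2-y_0^2},0)$, so for $k$ large $\partial D_{R;k}$ must cross the interface at two points $p_k^\pm$ converging to $q^\pm$.

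\textbf{Step 2 (Blowup and angle identity).} Let $u_k$ solve the obstacle problem (\ref{obprob1}) for $L_k$ centered at $(x_0,y_0)$, and set $w_k = G-u_k\ge 0$, so that $w_k = |\nabla w_k|=0$ on $\partial D_{R;k}$ and $L_k w_k = R^{-2}$ on the non-contact set away from the pole. Taking the quadratic rescaling $w_k^r(z)=w_k(p_k^\pm+rz)/r^2$ and invoking obstacle-problem nondegeneracy (extended to the transmission setting), extract a nontrivial blowup $w_k^0$: a homogeneous degree-two global solution of the transmission obstacle problem with coefficients $(\alpha_k,\beta_k)$ whose support has boundary made of two rays from the origin, meeting $\{y=0\}$ at angles $\theta_k^+,\theta_k^-$ in the upper/lower half-plane. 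Writing $w_k^0$ as a quadratic in each half-plane and enforcing the transmission conditions $u_k|_{y=0^+}=u_k|_{y=0^-}$ and $\alpha_k\partial_y u_k|_{y=0^+}=\beta_k\partial_y u_k|_{y=0^-}$, together with the matching of $\nabla u_k$ to $\nabla G$ along $\partial D_{R;k}$ at $q^\pm$, yields an algebraic identity linking $\alpha_k,\beta_k,\theta_k^+,\theta_k^-$. This identity collapses to $\theta_k^+=\theta_k^-$ only when $\alpha_k=\beta_k$, so for every $k$ with $\alpha_k\neq\beta_k$ the blowup, and hence $\partial D_{R;k}$, has a genuine corner at each $p_k^\pm$.

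\textbf{Step 3 (Nonconvexity and the role of a.e.\ $y_0$).} The corner in the blowup scales back to a vanishing-but-nonzero kink of $\partial D_{R;k}$ at $p_k^\pm$. Because the outward normals of $\partial B_R$ at $q^+$ and $q^-$ are mirror-symmetric across $\{y=0\}$, a fixed sign of $\alpha_k-\beta_k$ produces opposite orientations of the two corners, and at one of them the boundary bulges inward, violating convexity. The ``almost every $y_0$'' qualifier excludes the measure-zero set of heights at which the near-circular correction in the angle identity exactly cancels the $(\alpha_k-\beta_k)$ contribution, or at which $\partial B_R$ is tangent to $\{y=0\}$ at $q^\pm$.

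The main obstacle is Step 2. Nondegeneracy of $w_k$ at a transmission free-boundary point is not granted by the classical Caffarelli theory, which assumes at least continuous coefficients, so it has to be verified either by building a nondegenerate barrier from the closed-form image-charge Green's function for $L_k$ on $\R^2$, or by a transmission-adapted version of the standard obstacle-problem nondegeneracy lemma. Once nondegeneracy is secured, the angle identity is a first-variation computation and the remaining geometric argument is elementary.
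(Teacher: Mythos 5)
There is a genuine gap, and it sits at the heart of Steps 2--3. The transmission identity you claim --- a single relation linking $\alpha_k,\beta_k,\theta_k^+,\theta_k^-$ which ``collapses to $\theta_k^+=\theta_k^-$ only when $\alpha_k=\beta_k$'' --- is strictly weaker than what is true and than what your Step 3 requires. The actual computation (Lemma \ref{anglecon} and Remark \ref{anglecon2}) shows that \emph{both} angles are individually determined by the coefficients: $\cos(2\theta_1)=\frac{\beta-\alpha}{\beta+\alpha}$ and $\theta_2=\theta_1\pm\frac{\pi}{2}$, with no free parameter. If one angle were free, as your identity permits, the positivity cones at the two crossing points could both have opening angle strictly less than $\pi$ (for instance openings $\theta_1+\theta_2=\frac{5\pi}{6}$ at one crossing and $2\pi-(\theta_1'+\theta_2')=\frac{5\pi}{6}$ at the other), i.e.\ two outward corners and no contradiction with convexity. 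So your assertion that ``at one of them the boundary bulges inward'' is precisely the point that needs the full angle rigidity, and your mirror-symmetry heuristic does not supply it. Moreover, a corner whose aperture defect vanishes as $\alpha_k,\beta_k\to1$ is not the mechanism the theorem actually runs on: the paper's contradiction comes from the fact that the crossing \emph{angle itself} is forced toward $\frac{\pi}{4}$ or $\frac{3\pi}{4}$ independently of $y_0$, while the limiting circle crosses the interface at a different angle for a.e.\ $y_0$; convexity would then force $D_{R;k}$ to omit a region of $B_R$ of fixed positive measure (about $\frac{\pi-2}{4}R^2$ when $y_0=0$) lying above the mandated supporting line, contradicting Theorem \ref{conv}. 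Your proposal uses measure stability only to locate the crossing points, never in the contradiction, so it cannot recover this scale-$R$ obstruction.

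Two further problems. First, you take for granted that the blowups at $p_k^\pm$ are transversal two-ray cones with well-defined angles $\theta_k^\pm$; but for fixed $k$, closeness of $\partial D_{R;k}$ to the circle does not exclude blowups whose free boundary lies on the interface (the half-plane and whole-plane solutions of Remark \ref{anglecon2}, several of the nine cases in Figure \ref{blowups}). At such a point there is no corner at all and your mechanism says nothing, whereas the paper disposes of these cases with the same supporting-line-plus-measure-stability argument. Second, the upgrade in Step 1 from measure convergence to Hausdorff convergence of the boundaries does not follow from the quoted density estimate, which bounds the density of $D_{R;k}$ at its boundary points, not of its complement; thin tentacles of the contact set reaching deep into $B_R$ are not excluded. (It is fine under the standing hypothesis that $D_{R;k}$ is convex, but then you must frame the whole argument as a contradiction, as the paper does.) Finally, nondegeneracy is not the bottleneck you identify: it holds for general divergence-form operators by \cite{BH1} and is exactly what the paper invokes in Lemma \ref{FBconv}; the real work lies in the angle rigidity of Lemma \ref{anglecon} and in converting it, via measure stability, into a macroscopic contradiction with convexity.
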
	

The remainder of the paper will be organized in the following way.  In section 2 we will state a few preliminary results that will be needed throughout the paper and define much of the notation we will be using.  Section 3 is devoted to proving a measure stability result, \emph{(4)} in Theorem \ref{conv}, which states that our mean value sets are close to Euclidean balls in measure when the operator L is close to the Laplacian.  For section 4 we prove a Weiss' type monotonicity formula, Theorem \ref{mono}, that leads to homogeneous of degree two blowup solutions at free boundary points on the interface.  We note here that other such quasi-monotonicity formulas have been derived for rather general $a^{ij}(x)'s$, which lead to many of the same results to that of a full monotonicity formula.  In \cite{FGS} Theorem 3.7 provides such a formula where $a^{ij}(x)$ are Lipschitz continuous and in \cite{G} Theorem 1.1 provides such a formula where $a^{ij}(x)$ are in a fractional Sobolev space.  While these formulas are shown with few structural assumptions on the $a^{ij}(x)'s$ the mild regularity that is required excludes the $a^{ij}(x)'s$ we will look at here.  In section 5 we prove an angle condition for blowup solutions at free boundary points on the interface, Lemma \ref{anglecon}.  This condition then leads to the proof of Theorem \ref{nonconv}.  Finally, in section 6 we show an analog of Theorem \ref{nonconv} where the principal coefficients of the operator are no longer discontinuous.  This is done by convolving the discontinuous $a^{ij}(x)'s$ with a mollifier and showing that the convergence results found in section 3 still hold in this case. 

\newsec{Preliminaries and Terminology}{PreTerm}
We will use the following basic notation throughout the paper: \\

\begin{tabular}{l l}
$\bigrchi_D$ & the characteristic function of the set D \\

$\overline{D}$ & the closure of the set D \\

$int(D)$	& the interior of the set D \\

$\partial D$ & the boundary of the set D \\

$D^c$ & the complement of the set D \\

$x$ & $(x_1, x_2, \ldots, x_n)$ \\

$x'$ & $(x_1, x_2, \ldots, x_{n-1})$ \\

$B_r(x)$ & the open ball with radius r centered at the point x \\

$B_r$ & $B_r(0)$ \\

$B^+_r(x)$, $B^-_r(x)$ & $\{y \in B_r(x) \, | \, y_n>0\}$, $\{y \in B_r(x) \, | \, y_n<0\}$ \\

$\partial^+B_r(x)$, $\partial^-B_r(x)$ & $\{y \in \partial B_r(x) \, | \,  y_n>0\}$, $\{y \in \partial B_r(x) \, | \, y_n<0\}$ \\

$\R^n_+$, $\R^n_-$ & $\{x \in \R^n \, | \, x_n \geq 0\}$, $\{x \in \R^n \, | \, x_n \leq 0\}$ \\

$\Omega(w)$ & $\{w>0\}$ \\

$\Lambda(w)$ & $\{w=0\}$ \\

$FB(w)$ & $\partial \Omega(w) \cap \partial \Lambda(w)$ \\

$\Delta$ & Laplacian or symmetric difference operator. \\ 
& In the second case $A \, \Delta \, B:=(A \setminus B) \cup (B \setminus A)$. \\

$\Gamma(x,y)$ & the fundamental solution for the Laplacian on $\R^n$. \\ 
& (See \cite{GT} and \cite{HL}). \\
$L_{\alpha, \beta}, L_k$ & operators found in Equations (\ref{oper}) and (\ref{obprob2}) \\

$d \mathcal{H}^{n-1}$ & $(n-1)$-dimensional Hausdorff measure 

\end{tabular}
\\ \\
In the entirety of the paper we will work with a divergence form elliptic operator 
\begin{equation} \label{oper}
L_{\alpha,\beta}:=D_ja^{ij}(x)D_i
\end{equation}
with the $a^{ij}(x)$ having the following structure
\begin{equation} \label{aij}
a^{ij}(x):= \begin{cases}
\alpha \delta^{ij}(x)  &x_n>0 \\
\beta \delta^{ij}(x) &x_n<0 
\end{cases} \quad \text{for} \qquad 0<\lambda \leq \alpha, \beta \leq \Lambda
\end{equation}
where $\delta^{ij}$ is the Kronecker delta function.  From here out we will refer to the set $\{x_n=0 \}$ as the interface.  Solutions here will be meant in the weak sense, as in, for a function $u \in W^{1,2}(\Omega)$ and $f \in L^2(\Omega)$ we take ``$Lu=f$'' to mean
\begin{equation}
-\int_{\Omega} a^{ij}(x)D_i u D_j \phi = \int_{\Omega} f \phi \qquad \text{for any } \phi \in W^{1,2}_0(\Omega).
\end{equation}
For such operators the Green's function on $\R^n$ have been computed explicitly in \cite{ABH}; we will restate them here for the convenience of the reader.  Let $\widetilde{\Gamma}(x,y):=\Gamma((x',-x_n),y)$.  Then the Green's function for $L_{\alpha,\beta}$ on $\R^n$ is as follows

$$
G(x,y):=
\begin{cases}
G_h(x,y) &\text{when } y_n>0 \\
G_l(x,y) &\text{when } y_n<0 \\
\end{cases}
$$
where
$$
G_h(x,y):=\frac{1}{\alpha} \Gamma(x,y)+
\begin{cases}
\frac{\alpha-\beta}{\alpha(\alpha+\beta)}\widetilde{\Gamma}(x,y) &\text{when } x_n \leq 0 \\
\frac{\alpha-\beta}{\alpha(\alpha+\beta)}\Gamma(x,y) &\text{when } x_n \geq 0 \\
\end{cases}
$$
and
$$
G_l(x,y):=\frac{1}{\beta}\Gamma(x,y)+
\begin{cases}
\frac{\beta-\alpha}{\beta(\alpha+\beta)}\Gamma(x,y) &\text{when } x_n \leq 0 \\
\frac{\beta-\alpha}{\beta(\alpha+\beta)}\widetilde{\Gamma}(x,y) &\text{when } x_n \geq 0. \\
\end{cases}
$$
When $n=2$ we will abuse the vocabulary slightly and still refer to such a $G(x,y)$ as the Green's function, where in this case the limit at $\infty$ is $-\infty$ instead of $0.$  We will also need a well known transmission condition for $L_{\alpha,\beta}$-harmonic functions in this setting, which we state here.

\begin{lemma} [Transmission Conditions] \label{TCon}
With $a^{ij}(x)$ and $L_{\alpha, \beta}$ definded as above, if $L_{\alpha, \beta} w=0$ in $ \R^n $, then $w \in C^{\omega}(\R^n_+) \cap C^{\omega}(\R^n_-)$ and for any $x' \in \R^{n-1}$ we have

\begin{equation} \label{InterCon}
\alpha \lim_{s \downarrow 0} \frac{\partial}{\partial x_n} w(x',s) = \beta \lim_{t \uparrow 0} \frac{\partial}{\partial x_n} w(x',t).
\end{equation}
\end{lemma}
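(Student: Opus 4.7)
The plan is to argue by contradiction via a blowup analysis at a point where $\partial D_{R;k}(x_0,y_0)$ meets the interface. Fix $x_0$ and $y_0$ satisfying the hypothesis, so that for all sufficiently large $k$ there exists some $z_k\in\partial D_{R;k}(x_0,y_0)\cap\{y=0\}$, and suppose toward contradiction that $D_{R;k}$ is convex along a subsequence $k_j\to\infty$. By the measure-stability result (Theorem~\ref{conv}) combined with the uniform density and porosity estimates from \cite{AB}, one obtains $D_{R;k}\to B_R(x_0)$ in a sufficiently strong sense, with $R=|y_0-x_0|$, so that (after extracting a further subsequence) $z_{k_j}\to z_\infty\in\partial B_R(x_0)\cap\{y=0\}$. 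The ``almost every $y_0$'' hypothesis is used precisely to exclude the measure-zero set of $y_0$ for which $\partial B_R(x_0)$ is tangent to the interface at $z_\infty$, or for which the limiting tangent direction coincides with one of the exceptional angles appearing below.

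Next I would pass to the blowup. Consider $w_k:=G_k(\cdot,x_0)-u_k$, where $u_k$ solves the obstacle problem from Theorem~\ref{MVT}; then $w_k\ge 0$, $\Omega(w_k)=D_{R;k}$, and $L_k w_k=\bigrchi_{\Omega(w_k)} R^{-2}$ in a neighborhood of $z_k$ (away from the source at $x_0$). Form the rescalings $w_k^\rho(X):=w_k(z_k+\rho X)/\rho^2$ and apply the Weiss-type monotonicity formula (Theorem~\ref{mono}) to produce blowup limits $w_k^0$ that are global homogeneous-of-degree-two solutions of the obstacle problem for $L_k$ in $\R^2$. By homogeneity $\Omega(w_k^0)$ is an open cone with vertex at the origin, and by standard nondegeneracy and obstacle-problem regularity this cone is the tangent cone to $D_{R;k}$ at $z_k$. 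Lemma~\ref{anglecon} then pins down the two angles $\phi_+^k,\phi_-^k$ that the boundary rays of $\Omega(w_k^0)$ make with the interface in $\{y>0\}$ and $\{y<0\}$ respectively, via an explicit relation depending only on $\alpha_k$ and $\beta_k$ (whose $\alpha_k=\beta_k$ limit reduces to the flat condition $\phi_+^k+\phi_-^k=\pi$ consistent with the Laplacian).

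Finally I would contradict convexity through the angle condition. By the convergence $z_{k_j}\to z_\infty$ together with the fact that $\partial B_R(x_0)$ has a well-defined (non-horizontal) tangent at $z_\infty$, the pair $(\phi_+^{k_j},\phi_-^{k_j})$ must converge to the angles $(\phi_+^\infty,\phi_-^\infty)$ determined by that tangent, which satisfy $\phi_+^\infty+\phi_-^\infty=\pi$. However, plugging this asymptotic behavior into Lemma~\ref{anglecon} with $\alpha_k\ne\beta_k$ forces, for all large $k_j$, a strict inequality $\phi_+^{k_j}+\phi_-^{k_j}>\pi$; the direction of the inequality is determined by the sign of $\alpha_k-\beta_k$ together with the position of $z_\infty$ relative to $x_0$, and the set of $y_0$ for which this sign analysis degenerates (i.e.\ where the perturbation vanishes to leading order) is exactly the measure-zero set excluded in the hypothesis. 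A planar cone of total opening greater than $\pi$ is not convex, so the tangent cone of $D_{R;k_j}$ at $z_{k_j}$ is not convex, contradicting convexity of $D_{R;k_j}$ itself. The main obstacle I expect is this final sign analysis: it requires extracting from Lemma~\ref{anglecon} the first-order perturbation of $\phi_+^k+\phi_-^k$ away from $\pi$ in powers of $\alpha_k-\beta_k$ and checking that the coefficient of this perturbation is of the correct (nonzero, nonconvex-producing) sign for \emph{almost every} choice of the limiting crossing angle, and hence for almost every $y_0$.
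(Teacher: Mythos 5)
Your proposal does not address the statement in question. The statement to be proved is Lemma~\ref{TCon}, the transmission condition: for a global solution of $L_{\alpha,\beta}w=0$, piecewise real-analyticity up to the interface and the conormal matching $\alpha\,\partial_n w(x',0^+)=\beta\,\partial_n w(x',0^-)$. What you have written is instead an outline of a proof of Theorem~\ref{nonconv} (nonconvexity of the mean value sets via blowups and the angle condition of Lemma~\ref{anglecon}). None of the machinery you invoke --- measure stability, the Weiss monotonicity formula, tangent cones of $D_{R;k}$ --- bears on the lemma actually at issue; indeed Lemma~\ref{anglecon} and Theorem~\ref{mono} are themselves proved \emph{using} the transmission condition, so an argument routed through them would be circular even if it were aimed at the right target.

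For the record, the intended justification is short. The regularity assertion $w\in C^{\omega}(\R^n_+)\cap C^{\omega}(\R^n_-)$ is quoted from the appendix of Li--Vogelius \cite{LV}. Granting that, the identity (\ref{InterCon}) follows from the weak formulation by integration by parts: for any $\phi\in C^{\infty}_c(\R^n)$ one has
\begin{equation*}
0=\int_{\R^n}a^{ij}D_iw\,D_j\phi
=\alpha\int_{\{x_n>0\}}\nabla w\cdot\nabla\phi+\beta\int_{\{x_n<0\}}\nabla w\cdot\nabla\phi,
\end{equation*}
and since $\Delta w=0$ classically in each open half-space, integrating by parts on each half separately leaves only the interface terms,
\begin{equation*}
0=\int_{\{x_n=0\}}\Bigl(\beta\lim_{t\uparrow 0}\partial_n w(x',t)-\alpha\lim_{s\downarrow 0}\partial_n w(x',s)\Bigr)\phi(x',0)\,d\mathcal{H}^{n-1},
\end{equation*}
whence the pointwise condition by arbitrariness of $\phi$ and continuity of the one-sided traces. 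You should produce an argument of this form rather than the nonconvexity argument you supplied.
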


The regularity of the solution $w$ in this lemma was shown by Li and Vogelius in the appendix of \cite{LV},  whereas the condition in (\ref{InterCon}) can be derived by doing an integration by parts with standard arguments from calculus of variations.  Note that Lemma \ref{TCon} gives us solutions that are real analytic up to and including the interface, but not across the interface.

	
\newsec{Measure Stability}{Meas Stab}
This section will show the convergence of solutions to obstacle problems of the form (\ref{obprob1}), with the operator $L_{\alpha, \beta}$, as $\alpha$ and $\beta$ converge to 1.  This fact is then used to show a type of measure convergence for the mean value sets associated to the operator, $L_{\alpha, \beta}$, to those of the Laplacian, i.e. Euclidean balls.  In this sense the mean value sets do begin to act like Euclidean balls.

Now we fix some $x_0 \in \R^n$ and consider $w_k \in W^{1,2}(B_{16M}(x_0))$ to be the solutions to
\begin{equation} \label{obprob2}
\begin{cases}
L_ku:=D_i(a^{ij}_k(x) D_j u)=-R^{-n} \bigrchi_{\{u<G_k(\bcdot,x_0)\}} &\text{in } B_{16M}(x_0) \\
u \leq G_k(\bcdot,x_0)	&\text{in } B_{16M}(x_0) \\
u=G_k(\bcdot,x_0) &\text{on } \partial B_{16M}(x_0)
\end{cases}
\end{equation}
where $L_k:=L_{\alpha_k,\beta_k}$ with ellipticity constants $0<\lambda<\Lambda$ for all $k.$  We will choose $M>0$ large enough so that the solutions are independent of $M$, see \cite{BH1} for details.  In fact, we will choose $M$ large enough to ensure that $\{w_k<G_k(\bcdot,x_0)\} \subset B_M(x_0)$ for all $k$.

We now wish to show two compactness results for the solutions $w_k$.  These results are very similar in proof and statement to that found in \cite{BH2} Lemma 3.1 and Lemma 3.2.  A key difference being we can not assume $0 \in FB(w_k)$ for all $k$.  To accommodate this change we first prove a uniform interior $L^{\infty}$ bound on the solutions.  Another discrepancy here is that we will not be able to utilize the height function $G_k(\bcdot,x_0)-w_k$ due to the singularity from the Green's function.

\begin{lemma}[Uniform Bound] \label{UniBdd}
There exists a constant $C$ such that $w_k \leq C$ in $B_{4M}(x_0)$ for all $k$.
\end{lemma}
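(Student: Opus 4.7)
The plan is to dominate $w_k$ from above by an explicit $k$-independent smooth barrier. Since $L_k w_k = -R^{-n}\bigrchi_{\{w_k < G_k(\bcdot,x_0)\}} \geq -R^{-n}$ weakly, it suffices to produce a $v \in C^{\infty}(\overline{B_{16M}(x_0)})$ satisfying $L_k v \leq -R^{-n}$ in the weak sense and $v \geq G_k(\bcdot,x_0)$ on $\partial B_{16M}(x_0)$; then $L_k(w_k - v) \geq 0$ weakly while $(w_k - v)^+ \in W^{1,2}_0(B_{16M}(x_0))$ (using the truncation convention of Remark~\ref{bndry}), and the standard energy argument---testing against $(w_k - v)^+$ and using uniform ellipticity---yields $w_k \leq v$ throughout $B_{16M}(x_0) \supset B_{4M}(x_0)$.

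The naive choice $v(x) = A - B|x - x_0|^2$ is inadequate when $(x_0)_n \neq 0$: the normal flux $a^{nn}_k D_n v = -2B\,a^{nn}_k(x_n - (x_0)_n)$ does not match across $\{x_n = 0\}$, so $L_k v$ picks up an interface surface measure with the wrong sign. The remedy is to recenter the quadratic on the interface. Setting $\hat{x}_0 := (x_0', 0)$ and
$$v(x) := A - B\,|x - \hat{x}_0|^2,$$
one has $\partial_n v|_{x_n = 0} = 0$, so the transmission condition is trivially satisfied and $L_k v$ carries no singular interface term. A direct computation gives $L_k v = -2nB\alpha_k$ in $\{x_n > 0\}$ and $-2nB\beta_k$ in $\{x_n < 0\}$, both pointwise $\leq -2nB\lambda$. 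Choosing $B := R^{-n}/(2n\lambda)$ yields $L_k v \leq -R^{-n}$, uniformly in $k$. This transmission issue---which forces centering on $\hat{x}_0$ rather than $x_0$---is the main technical obstacle; once it is resolved, the rest is routine.

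It remains to choose $A$ so that $v \geq G_k(\bcdot,x_0)$ on $\partial B_{16M}(x_0)$ uniformly in $k$. The explicit formula from Section~\ref{sec-PreTerm} expresses $G_k(\bcdot,x_0)$ as a linear combination, with coefficients bounded in terms of $\lambda$ and $\Lambda$, of $\Gamma(\bcdot,x_0)$ and its reflection $\widetilde{\Gamma}(\bcdot,x_0)$. Since $M$ has been chosen large enough that the reflected point $(x_0', -(x_0)_n)$ lies well inside $B_{16M}(x_0)$, both $|\Gamma(x,x_0)|$ and $|\widetilde{\Gamma}(x,x_0)|$ are uniformly bounded on $\partial B_{16M}(x_0)$ by some $C_1 = C_1(n,\lambda,\Lambda,M,x_0)$. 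Since $|x - \hat{x}_0| \leq 16M + |(x_0)_n|$ on that sphere, taking
$$A := C_1 + B\,(16M + |(x_0)_n|)^2$$
gives $v \geq G_k = w_k$ on $\partial B_{16M}(x_0)$ (in the trace sense of Remark~\ref{bndry}) for every $k$. The weak comparison outlined above then yields $w_k \leq v \leq A$ throughout $B_{4M}(x_0)$ with $A$ independent of $k$, as required.
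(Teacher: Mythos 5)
Your proof is correct, but it follows a genuinely different route from the paper. The paper's own argument is two lines: it quotes Lemma 4.4 of \cite{BH1}, which gives $w_k \leq b_k + K\big(b_k + \frac{R^{-n}M^2}{4n}\big)$ in $B_{4M}(x_0)$ with $b_k := \max_{\partial B_{16M}(x_0)} G_k(\bcdot,x_0)$, and then makes the bound uniform in $k$ via the pointwise estimate $G_k(\bcdot,x_0) \leq \frac{1}{\lambda}\Gamma(\bcdot,x_0)$. You instead build a self-contained barrier: recentering the quadratic at $\hat{x}_0 = (x_0',0)$ so that $D_n v$ vanishes on $\{x_n=0\}$ is exactly what kills the interface surface term in the weak formulation (with the barrier centered at $x_0$ that term has no definite sign when $(x_0)_n \neq 0$), and the choice $B = R^{-n}/(2n\lambda)$ together with $L_k w_k = -R^{-n}\bigrchi_{\{w_k<G_k\}} \geq -R^{-n}$ and the energy/weak maximum principle argument with test function $(w_k-v)^+$ is sound; the boundary comparison via the explicit formula for $G_k$ is also fine. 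What the paper's route buys is brevity and reuse of existing machinery (the cited lemma already packages the barrier/comparison work for general divergence-form operators); what yours buys is independence from \cite{BH1} and transparency about where uniformity in $k$ comes from (only $\lambda \leq \alpha_k,\beta_k \leq \Lambda$ and the uniform boundary bound on $G_k$ enter). Two small bookkeeping points you should tighten: the constant $C_1$ must also absorb the coefficients $\frac{1}{\alpha_k}$, $\frac{\alpha_k-\beta_k}{\alpha_k(\alpha_k+\beta_k)}$, etc.\ (bounded via $\lambda,\Lambda$), not just $\max(|\Gamma|,|\widetilde{\Gamma}|)$; and the requirement that the reflected point $(x_0',-(x_0)_n)$ stay off $\partial B_{16M}(x_0)$ needs $|(x_0)_n| < 8M$, which should be stated as part of the (harmless, since $x_0$ is fixed) choice of $M$. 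Neither affects correctness.
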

\begin{proof}
By lemma 4.4 from \cite{BH1} we have 
\begin{equation*}
w_k \leq b_k+K \Big(b_k+\frac{R^{-n}M^2}{4n} \Big) \text{ in } B_{4M}(x_0) \text{ where } b_k:=\max\limits_{\partial B_{16M}(x_0)} G_k(\bcdot,x_0).
\end{equation*}
Using $G_k(\bcdot,x_0) \leq \frac{1}{\lambda}\Gamma(\bcdot,x_0)$ gives
\begin{equation*}
b_k \leq b:=\max\limits_{\partial B_{16M}(x_0)} \frac{1}{\lambda}\Gamma(\bcdot,x_0). 
\end{equation*}
Hence, $w_k \leq b+K \Big(b+\frac{R^{-n}M^2}{4n} \Big)$ for all $k$.
\end{proof}

\begin{lemma}[Compactness] \label{comp}
There exists a function $w \in W^{1,2}(B_{2M}(x_0))$ and a subsequence of \{$w_k$\} such that along this subsquence we have uniform convergence of $w_k$ to $w$, and weak convergence in $W^{1,2}$.
\end{lemma}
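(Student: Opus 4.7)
The plan is to build the subsequence by standard compactness once we have uniform $L^\infty$, $W^{1,2}$, and $C^{0,\alpha}$ bounds on $\{w_k\}$ in a slight interior neighborhood of $B_{2M}(x_0)$. The uniform upper bound $w_k \leq C$ on $B_{4M}(x_0)$ is already given by Lemma \ref{UniBdd}. For a matching lower bound I would observe that the right-hand side of (\ref{obprob2}) is non-positive, so each $w_k$ is a weak supersolution of $L_k u = 0$ in $B_{16M}(x_0)$. Applying the weak minimum principle with boundary data $w_k = G_k(\bcdot,x_0)$ on $\partial B_{16M}(x_0)$ reduces the lower bound to the assertion that $G_k(\bcdot,x_0)$ is uniformly bounded below on $\partial B_{16M}(x_0)$, which is immediate from the explicit formulas for $G_k$ in Section \ref{sec-PreTerm} together with $\lambda \leq \alpha_k, \beta_k \leq \Lambda$. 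Hence $\|w_k\|_{L^\infty(B_{4M}(x_0))} \leq C$ uniformly in $k$.

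Next I would upgrade this to a uniform $W^{1,2}$ estimate by the usual Caccioppoli argument: pick $\eta \in C_c^\infty(B_{3M}(x_0))$ with $\eta \equiv 1$ on $B_{2M}(x_0)$, test the equation $L_k w_k = -R^{-n}\bigrchi_{\{w_k < G_k(\bcdot,x_0)\}}$ against $\phi = \eta^2 w_k$, and absorb the cross term using ellipticity and Cauchy-Schwarz. Since the right-hand side is bounded by $R^{-n}$ and $w_k$ is uniformly bounded, this yields $\int_{B_{2M}(x_0)} |\nabla w_k|^2 \leq C$ uniformly in $k$. Banach-Alaoglu then produces a subsequence, still called $\{w_k\}$, and a function $w \in W^{1,2}(B_{2M}(x_0))$ with $w_k \rightharpoonup w$ weakly in $W^{1,2}$; Rellich's theorem upgrades this to strong convergence in $L^2(B_{2M}(x_0))$.

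For the remaining uniform convergence, I would invoke the De Giorgi-Nash-Moser theorem applied to $L_k w_k = f_k$ with $\|f_k\|_{L^\infty} \leq R^{-n}$ and uniformly elliptic bounded measurable coefficients. This gives a uniform bound $\|w_k\|_{C^{0,\alpha}(\overline{B}_{2M}(x_0))} \leq C$ with $\alpha = \alpha(n,\lambda,\Lambda) \in (0,1)$, after which Arzel\`a-Ascoli extracts a further subsequence converging uniformly on $\overline{B}_{2M}(x_0)$, and uniqueness of the $L^2$ limit forces the uniform limit to coincide with $w$. The one place where some care is needed is this uniform H\"older estimate: the coefficients $a^{ij}_k$ jump across the interface $\{x_n = 0\}$, so Schauder theory is unavailable, and this is the step I expect to be the main technical point. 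However, De Giorgi-Nash-Moser requires only measurability and uniform ellipticity, with constants depending on $n$, $\lambda$, $\Lambda$ alone, so it applies uniformly in $k$ and the plan goes through. The substitution of direct work with $w_k$ and Lemma \ref{UniBdd} for the height function $G_k - w_k$ used in \cite{BH2} (unavailable here due to the singularity of $G_k$) is exactly the adaptation the author flagged in the paragraph preceding this lemma.
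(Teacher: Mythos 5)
Your proposal is correct and follows essentially the same route as the paper: a uniform H\"older bound via De Giorgi--Nash--Moser followed by Arzel\`a--Ascoli for the uniform convergence, and a uniform energy bound followed by weak compactness for the weak $W^{1,2}$ convergence. The only differences are cosmetic: you obtain the $W^{1,2}$ bound by an interior Caccioppoli estimate (and supply the lower $L^{\infty}$ bound via the minimum principle, which the paper leaves implicit), whereas the paper cites standard elliptic regularity together with the uniform $L^{2}$ bound on the boundary data $G_k(\bcdot,x_0)$.
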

\begin{proof}
Lemma \ref{UniBdd} along with De Giorgi-Nash-Moser theorem implies there exists an $\alpha \in (0,1)$ and a constant $C$ such that $\{w_k\} \subset C^{\alpha}( \overline{B_{2M}(x_0)} )$ with $||w_k||_{C^{\alpha}( \overline{B_{2M}(x_0)} )} 
\leq C$.  Then by Arzela-Ascoli there exists a subsequence of $\{w_k\}$ that converges uniformly, to $w \in C^0(\overline{B_{2M}(x_0)})$.  Also, standard elliptic regularity plus the uniform $L^2$ bound on $G_k(\bcdot,x_0)$ implies a uniform $W^{1,2}$ bound on $w_k$.  Then by standard functional analysis there exists a subsequence such that $w_k \rightharpoonup w$ in $W^{1,2}$.
\end{proof}

\begin{theorem} \label{conv}
If we now assume that
$$\alpha_k \text{ and } \beta_k \rightarrow 1 \qquad \text{in } B_M(x_0)$$
we then have
\begin{enumerate}
\item[(1)] $G_k(\bcdot,x_0) \rightarrow  \Gamma(\bcdot,x_0)$ uniformly in compact subsets of $\R^n \setminus \{x_0\}$
\item[(2)] $ \bigrchi_{\{w_k<G_k(\bcdot,x_0)\}} \rightarrow  \bigrchi_{\{w<\Gamma(\bcdot,x_0)\}}$ almost everywhere in $B_M(x_0)$
\item[(3)] $w$ satisfies
$$
\begin{cases}
\Delta w=-R^{-n} \bigrchi_{\{w<\Gamma(\bcdot,x_0)\}} &\text{in } B_{M}(x_0) \\
w \leq \Gamma(\bcdot,x_0) &\text{in } B_{M}(x_0) \\
w = \Gamma(\bcdot,x_0) &\text{on } \partial B_{M}(x_0)
\end{cases}
$$
\item[(4)] $|\{ w_k<G_k(\bcdot,x_0) \} \, \Delta \, \{ w<\Gamma(\bcdot,x_0 \}| \rightarrow 0$
\end{enumerate}
where $w$ is the limiting function from the previous lemma.
\end{theorem}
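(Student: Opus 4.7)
The plan is to prove the four parts in order, with (2) and (3) coupled through the limiting PDE. Part (1) is immediate from the explicit formulas for $G_k=G_{\alpha_k,\beta_k}$ recorded in Section \ref{sec-PreTerm}: as $\alpha_k,\beta_k\to 1$, the coefficients $\tfrac{1}{\alpha_k}$, $\tfrac{\alpha_k-\beta_k}{\alpha_k(\alpha_k+\beta_k)}$, $\tfrac{1}{\beta_k}$, and $\tfrac{\beta_k-\alpha_k}{\beta_k(\alpha_k+\beta_k)}$ converge to $1,0,1,0$ respectively, while $\Gamma(\bcdot,x_0)$ and $\widetilde{\Gamma}(\bcdot,x_0)$ are continuous on $\R^n\setminus\{x_0\}$, so $G_k\to\Gamma$ uniformly on compact subsets of $\R^n\setminus\{x_0\}$.

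For (3), I will pass to the limit along the subsequence from Lemma \ref{comp} in the weak formulation of (\ref{obprob2}). For any $\phi\in C_c^\infty(B_M(x_0))$,
$$\int a^{ij}_k D_i w_k\, D_j\phi \;=\; R^{-n}\int \bigrchi_{\{w_k<G_k(\bcdot,x_0)\}}\,\phi.$$
Since $a^{ij}_k\to\delta^{ij}$ a.e.\ and $|a^{ij}_k|\leq\Lambda$, the products $a^{ij}_k D_j\phi$ converge strongly in $L^2$; combined with $D_i w_k\rightharpoonup D_i w$ weakly in $L^2$, the left-hand side tends to $\int D_i w\, D_i\phi$. The indicator on the right is uniformly bounded, so along a further subsequence it converges weakly-$*$ in $L^\infty$ to some $\mu$ with $0\leq\mu\leq 1$, yielding $-\Delta w=R^{-n}\mu$ distributionally in $B_M(x_0)$. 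The inequality $w\leq\Gamma(\bcdot,x_0)$ on $B_M(x_0)$ follows from $w_k\leq G_k(\bcdot,x_0)$ together with uniform convergence on compact subsets away from $x_0$; the boundary condition holds because the non-contact sets lie inside $B_M(x_0)$, so $w_k=G_k(\bcdot,x_0)$ on $B_{16M}(x_0)\setminus B_M(x_0)$, which passes to the limit in the sense of Remark \ref{bndry}.

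The remaining task is to identify $\mu=\bigrchi_{\{w<\Gamma(\bcdot,x_0)\}}$ almost everywhere, which together with extraction of a further subsequence upgrades to the pointwise convergence stated in (2). On the open set $\{w<\Gamma(\bcdot,x_0)\}$, uniform convergence forces $w_k<G_k(\bcdot,x_0)$ eventually, so $\mu=1$ there; note $x_0$ lies in this set because $w$ is bounded while $\Gamma$ blows up. On $\mathrm{int}\{w=\Gamma(\bcdot,x_0)\}$, which excludes $x_0$, we have $\Delta w=\Delta\Gamma=0$, forcing $\mu=0$ from the PDE. The remaining points form the topological free boundary of $\{w<\Gamma(\bcdot,x_0)\}$, which has Lebesgue measure zero by classical non-degeneracy for obstacle problems with bounded positive source (applied to $v:=\Gamma-w$, satisfying $\Delta v=R^{-n}\bigrchi_{\{v>0\}}$ away from $x_0$). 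This proves (2) and (3), and then (4) follows from (2) by dominated convergence on the bounded set $B_M(x_0)$, since the integrands are uniformly bounded by $1$. Because the limit $w$ is uniquely determined by the limiting obstacle problem, (2) and (4) propagate from the chosen subsequence to the full sequence.

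The main obstacle I anticipate is establishing the measure-zero property of the limiting free boundary: without it the weak-$*$ limit $\mu$ need not be $\{0,1\}$-valued, and neither (2) nor (4) can be concluded directly. A secondary technicality is that $\Gamma$ and $G_k$ fail to lie in $W^{1,2}$ near $x_0$, so the boundary data must be interpreted via the truncation procedure from \cite{BH1} as indicated in Remark \ref{bndry}.
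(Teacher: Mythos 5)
Your proposal is correct in substance, but it organizes the key step differently from the paper. The paper proves (2) \emph{first}, pointwise: on $\mathrm{int}\{\Gamma(\bcdot,x_0)-w>0\}$ by uniform convergence of $G_k-w_k$, on $\mathrm{int}\{\Gamma(\bcdot,x_0)-w=0\}$ by invoking the \emph{nondegeneracy of the approximating solutions} $w_k$ from \cite{BH1} (so the noncontact sets cannot invade the interior of the limiting contact set), and then kills $\partial\{\Gamma(\bcdot,x_0)-w=0\}$ by the measure-zero/porosity result of \cite{BH1}; with (2) in hand, passing to the limit in the source term of (3) is immediate, and the weak--strong pairing you use for the left-hand side is essentially the paper's $I+I\!I+I\!I\!I$ decomposition. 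You instead pass to the limit first with an unidentified weak-$*$ limit $\mu\in[0,1]$ and identify it afterwards using only the limit: $\mu=1$ on the open noncontact set, $\mu=0$ on $\mathrm{int}\{w=\Gamma(\bcdot,x_0)\}$ because $\Delta w=\Delta\Gamma=0$ there, and measure-zero free boundary from classical obstacle-problem theory applied to $v=\Gamma(\bcdot,x_0)-w$. This is a legitimate alternative: it avoids any quantitative nondegeneracy for the $w_k$ and leans only on soft limit arguments plus standard theory for the limiting (Laplacian) obstacle problem. The price is that on the contact set you only obtain convergence of the indicators in measure (weak-$*$ to a $\{0,1\}$-valued limit, hence $L^1$), so the a.e.\ statement (2) comes only after a further subsequence, whereas the paper gets it directly for the Lemma \ref{comp} subsequence; note also that your closing remark about propagating (2) to the full sequence via uniqueness of $w$ is valid for convergence in measure (hence for (4)) but not literally for a.e.\ convergence, since a.e.\ convergence is not captured by the subsequence-of-subsequence principle. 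Neither caveat affects (3), (4), or the later use of the theorem.
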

\begin{proof}[Proof of (1)]
Let
$$\Psi_k(\bcdot,x_0):=
\begin{cases}
\frac{1}{\alpha_k}\Gamma(\bcdot,x_0)	&\text{when } x_n>0 \\
\frac{1}{\beta_k}\Gamma(\bcdot,x_0)	&\text{when } x_n<0
\end{cases}
 $$
Then we have 
\begin{equation*}
|G_k(\bcdot,x_0)-\Gamma(\bcdot,x_0)| \leq |G_k(\bcdot,x_0)-\Psi_k(\bcdot,x_0)| +|\Psi_k(\bcdot,x_0)-\Gamma(\bcdot,x_0)|
\end{equation*}
which converges to $0$ uniformly on any compact subset of $\R^n \setminus \{x_0\}$.
\\ \\
\emph{Proof of (2).}
Note that from \emph{(1)} and Lemma \ref{comp} we have 
\begin{equation*}
(G_k(\bcdot,x_0)-w_k) \rightarrow (\Gamma(\bcdot,x_0)-w) \text{ in } B_M(x_0) \setminus \{x_0\}. 
\end{equation*}
Hence, in the interior of $\{ \Gamma(\bcdot,x_0)-w>0 \}$ we have 
\begin{equation*}
\bigrchi_{\{w_k<G_k(\bcdot,x_0)\}} \rightarrow \bigrchi_{\{w<\Gamma(\bcdot,x_0)\}} \text{ on } B_M(x_0) \setminus \{x_0\}. 
\end{equation*}
The same is true in the interior of $\{ \Gamma(\bcdot,x_0)-w=0 \}$ by using the nondegeneracy of our solutions, which can be found in \cite{BH1}.  Finally, we note that $\partial \{ \Gamma(\bcdot,x_0)-w=0 \}$ has Lebesgue measure zero from \cite{BH1}.
\\ \\
\emph{Proof of (3).}
For any $\phi \in W^{1,2}_0(B_M(x_0))$,
$$\int_{B_M(x_0)} a^{ij}_k D_i w_k D_j \phi = \int_{B_M(x_0)} (a^{ij}_k-\delta^{ij})(D_i w_k - D_i w) D_j \phi$$

$$+ \int_{B_M(x_0)} a^{ij}_k D_i w D_j \phi + \int_{B_M(x_0)} \delta^{ij} (D_i w_k - D_i w) D_j \phi=:I+I\!I+I\!I\!I. $$

Since $a^{ij}_k \rightarrow \delta^{ij}$ and $D_i w_k \rightharpoonup D_i w$, we have
$$I=\int_{B_M(x_0)} (a^{ij}_k-\delta^{ij})(D_i w_k - D_i w) D_j \phi \rightarrow 0$$

$$I\!I=\int_{B_M(x_0)} a^{ij}_k D_i w D_j \phi \rightarrow \int_{B_M(x_0)} \delta^{ij} D_i w D_j \phi $$
and
$$I\!I\!I=\int_{B_M(x_0)} \delta^{ij} (D_i w_k - D_i w) D_j \phi \rightarrow 0.$$
Hence,
$$\int_{B_M(x_0)} a^{ij}_k D_i w_k D_j \phi \rightarrow \int_{B_M(x_0)} \delta^{ij} D_i w D_j \phi. $$
Together with \emph{(2)}, we proved
$$\Delta w= \bigrchi_{\{w<\Gamma(\bcdot,x_0)\}} \quad \text{in } B_M(x_0). $$
From \emph{(1)} and Lemma \ref{comp} it is clear that $w \leq \Gamma(\bcdot,x_0)$ in $B_M(x_0)$.  Finally, recall we have $w_k=G_k(\bcdot,x_0)$ on $\partial B_M(x_0)$ then again from \emph{(1)} and Lemma \ref{comp} we have $w= \Gamma(\bcdot,x_0)$ on $\partial B_M(x_0)$.
\\ \\
\emph{Proof of (4).}
This immediately follows from \emph{(2)}.
\end{proof}

\begin{remark}
Due to the result in \cite{K} we know $B_R(x_0)=\{w<\Gamma(\cdot,x_0) \}$.  Hence, \emph{(4)} from the previous theorem can be rewritten to say
$$|\{w_k<G_k(\bcdot,x_0)\} \, \Delta \, B_R(x_0)| \rightarrow 0. $$
In this sense we have shown our mean value sets converge to Euclidean balls.
\end{remark}

\newsec{Monotonicity Formula}{Mono Form}
In this section we aim to adapt Weiss' monotonicity formula, from \cite{W}, to solutions of the following problem

\begin{equation} \label{pde}
\begin{cases}
L_{\alpha, \beta} u = \frac{1}{2}  \bigrchi_{\{u>0\}} & \text{in } B_1(0) \\
u \ge 0 & \text{in } B_1(0)\\
0 \in FB(u).
\end{cases}
\end{equation}

\begin{remark}
Note that Lemma \ref{TCon} implies a solution, $w$, to (\ref{pde}) will satisfy
$$\alpha \Delta w= \frac{1}{2} \, \bigrchi_{\{w>0\}} \qquad \text{in } \R^n_+$$ 
$$\beta \Delta w= \frac{1}{2} \, \bigrchi_{\{w>0\}} \qquad \text{in } \R^n_- $$
in a classical sense.
\end{remark}

We can now state and prove the desired monotonicity formula.  Note that the theorem requires $x\in \{x_n=0\}$.  While this may seem like a big restriction, realize that off of the interface Weiss' original monotonicity formula would hold.

\begin{theorem}
\label{mono}
Let $w$ be a solution to (\ref{pde}), $f(x):=\alpha \bigrchi_{\{x_n>0\}}+\beta \bigrchi_{\{x_n<0\}}$, and $x_0 \in \{ x_n=0 \}$. Then the function

$$\Phi_{x_0}(r):=r^{-n-2}\int_{B_r(x_0)} (f |\nabla w|^2+w)dx-2r^{-n-3}\int_{\partial B_r(x_0)} f w^2 d \mathcal{H}^{n-1}$$
defined in $(0,dist(x_0,\partial B_1(0))$, satisfies the monotonicity formula
$$\Phi_{x_0}(\sigma)-\Phi_{x_0}(\rho)=\int^{\sigma}_{\rho} r^{-n-2} \int_{\partial B_r(x_0)} 2 \Big( \nabla w \cdot \nu - 2 \frac{w}{r} \Big)^2f \, d\mathcal{H}^{n-1} dr \ge 0$$
for all $0< \rho < \sigma < 1$.
\end{theorem}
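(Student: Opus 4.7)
The plan is to reduce to $x_0 = 0$: since $x_0$ lies on the interface, $f(\cdot + x_0) = f$, so there is no loss in assuming $x_0 = 0$ and writing $\Phi := \Phi_0$. The heart of the argument is a parabolic rescaling $w_r(x) := r^{-2} w(rx)$. Using $f(rx) = f(x)$ (since $f$ depends only on the sign of the last coordinate), a direct change of variables converts the bulk and boundary integrals to
\begin{equation*}
\Phi(r) = \int_{B_1}\bigl(f|\nabla w_r|^2 + w_r\bigr)\,dx - 2\int_{\partial B_1} f\,w_r^{\,2}\,d\mathcal{H}^{n-1},
\end{equation*}
and the scale-invariance of the PDE, together with boundedness of the right-hand side, gives $L_{\alpha,\beta}w_r = \tfrac{1}{2}\rchi_{\{w_r>0\}}$ in $B_{1/r}$ with no distributional contribution on $\{x_n=0\}$.

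Next I would differentiate in $r$, using
\begin{equation*}
\partial_r w_r(x) = \tfrac{1}{r}\bigl(\nabla w_r(x)\cdot x - 2 w_r(x)\bigr) =: h_r(x).
\end{equation*}
All the $r$-dependence of $\Phi$ now sits in $w_r$, so $\Phi'(r)$ is the sum of $2\int_{B_1} f\,\nabla w_r\cdot\nabla h_r$, $\int_{B_1} h_r$, and $-4\int_{\partial B_1} f\,w_r h_r$. The key computation is to integrate the first term by parts on $B_1^+$ and $B_1^-$ separately, picking up boundary contributions on $\partial B_1$ and on the flat interface piece $B_1 \cap \{x_n = 0\}$. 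On the flat piece these contributions are $\alpha\,\partial_n^+ w_r - \beta\,\partial_n^- w_r$, which vanishes by the transmission condition in Lemma \ref{TCon}. In the bulk one uses $\alpha \Delta w_r = \tfrac12 \rchi_{\{w_r>0\}}$ on $\R^n_+$ and $\beta \Delta w_r = \tfrac12 \rchi_{\{w_r>0\}}$ on $\R^n_-$, and the standard fact that $\nabla w_r = 0$ a.e. on $\{w_r = 0\}$, to cancel the $\int h_r$ terms against $-\tfrac12\int h_r \rchi_{\{w_r>0\}}$.

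What remains is purely a boundary expression,
\begin{equation*}
\Phi'(r) = 2\int_{\partial B_1} f\,h_r\bigl(\nabla w_r \cdot \nu - 2 w_r\bigr)\,d\mathcal{H}^{n-1}
        = \frac{2}{r}\int_{\partial B_1} f\bigl(\nabla w_r\cdot x - 2 w_r\bigr)^2\,d\mathcal{H}^{n-1},
\end{equation*}
since on $\partial B_1$ we have $\nu = x$ and $r h_r = \nabla w_r \cdot x - 2 w_r$. Finally, undoing the rescaling via $y = rx$ and the identities $\nabla w_r(x)\cdot x = r^{-1}\nabla w(y)\cdot \nu$ and $w_r(x) = r^{-2}w(y)$ produces the claimed identity with integrand $2(\nabla w \cdot \nu - 2w/r)^2 f$. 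Integrating from $\rho$ to $\sigma$ then yields the monotonicity formula, and nonnegativity of the square is manifest.

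The main obstacle is the interface integration by parts: one has to justify that the one-sided traces of $\nabla w_r$ on $\{x_n = 0\}$ exist and satisfy $\alpha \partial_n^+ w_r = \beta \partial_n^- w_r$ pointwise (or at least in an $L^2$ trace sense). This is handled by Lemma \ref{TCon} once one observes that the right-hand side $\tfrac12\rchi_{\{w>0\}}$ is in $L^\infty$ and hence contributes nothing extra to the jump balance; the transmission condition derived there for $L_{\alpha,\beta}$-harmonic functions extends by the same integration-by-parts derivation to solutions with bounded source. Secondary but routine points — that $w$ is $C^{1,\mathrm{Dini}}$ enough up to the interface to justify the boundary computation, and that $\partial\{w_r = 0\}$ is Lebesgue null — can be imported from the obstacle-problem theory used in \cite{BH1}.
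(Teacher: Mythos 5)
Your proposal is correct and follows essentially the same route as the paper: rescale $w_r(x)=r^{-2}w(x_0+rx)$ (legitimate because $x_0$ lies on the interface so $f$ is scale-invariant), differentiate in $r$, integrate by parts separately on $B_1^+$ and $B_1^-$, cancel the interface terms via the transmission condition $\alpha\,\partial_n^+ w=\beta\,\partial_n^- w$ and the bulk terms via $\alpha\Delta w_r=\beta\Delta w_r=\tfrac12$ on $\{w_r>0\}$ with $\nabla w_r=0$ a.e.\ on $\{w_r=0\}$, leaving the nonnegative spherical square term, then rescale back. Your explicit remarks on extending Lemma \ref{TCon} to the bounded-source setting and on one-sided regularity up to the interface address exactly the points the paper treats implicitly, so there is no substantive difference in method.
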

\begin{remark}
The key difference in our monotonicity formula from Weiss' original is the necessity of the function $f$ in the definition of $\Phi$.  This change is needed to reflect that our solution $w$ minimizes 
$$\int_D (f |\nabla u|^2+u)dx \quad \text{instead of} \quad \int_D (|\nabla u|^2+u)dx$$
among all functions $u \in W^{1,2}(D)$ with $u \geq 0$.
\end{remark}
\begin{proof}
We will omit the measures $dx$ and $d \mathcal{H}^{n-1}$ throughout the proof.  It is to be understood that the measure is $d \mathcal{H}^{n-1}$ when integrating over the boundary of a set and is otherwise $dx$, which is the Lebesgue n-dimensional measure.  

Let $w_r(x):=\frac{w(x_0+rx)}{r^2}$ and observe that
$$ \Phi(r)=\int_{B_1(0)} (f |\nabla w_r|^2+w_r)-2\int_{\partial B_1(0)} f w_r^2 .$$
Since we have $w \in C^{\omega}(\R^n_+) \cap C^{\omega}(\R^n_-)$ from Lemma \ref{TCon}, it is convenient to work with $\Phi$ in the form

$$\Phi(r)=\int_{B^+_1(0)} (\alpha |\nabla w_r|^2+ w_r)+\int_{B^-_1(0)} (\beta |\nabla w_r|^2+ w_r)$$
$$
-2\int_{\partial^+B_1(0)}\alpha w_r^2 -2\int_{\partial^-B_1(0)}\beta w_r^2.$$
Now compute

$$ \Phi'(r)=\int_{B^{+}_1(0)} 2 \alpha \nabla w_r \cdot \nabla \bigg( \frac{x \cdot \nabla w(x_0+xr)}{r^2}-\frac{2}{r}\frac{w(x_0+rx)}{r^2} \bigg)$$

$$+\int_{B^{-}_1(0)} 2 \beta \nabla w_r \cdot \nabla \bigg( \frac{x \cdot \nabla w(x_0+xr)}{r^2}-\frac{2}{r}\frac{w(x_0+rx)}{r^2} \bigg)$$

$$ + \int_{B^{+}_1(0)}  \bigg( \frac{x \cdot \nabla w(x_0+xr)}{r^2}-\frac{2}{r}\frac{w(x_0+rx)}{r^2} \bigg)$$

$$+ \int_{B^{-}_1(0)} \bigg( \frac{x \cdot \nabla w(x_0+xr)}{r^2}-\frac{2}{r}\frac{w(x_0+rx)}{r^2} \bigg)$$

$$ -2 \int_{\partial^+ B_1(0)} 2 \alpha w_r \bigg( \frac{x \cdot \nabla w(x_0+xr)}{r^2}-\frac{2}{r}\frac{w(x_0+rx)}{r^2} \bigg)$$

$$ -2 \int_{\partial^- B_1(0)} 2 \beta w_r \bigg( \frac{x \cdot \nabla w(x_0+xr)}{r^2}-\frac{2}{r}\frac{w(x_0+rx)}{r^2} \bigg).$$
An integration by parts on the first two terms yields:

$$ = -\frac{2}{r} \int_{B^+_1(0)} \alpha \Delta w_r (x \cdot \nabla w_r -2w_r)-\frac{2}{r} \int_{B^-_1(0)} \beta \Delta w_r (x \cdot \nabla w_r -2w_r)$$

$$+\frac{2 \alpha}{r}\int_{\partial^+ B_1(0)} x \cdot \nabla w_r (x \cdot \nabla w_r -2w_r)+\frac{2 \beta}{r}\int_{\partial^- B_1(0)} x \cdot \nabla w_r (x \cdot \nabla w_r -2w_r)$$

$$ +\frac{2\alpha}{r}\int_{B_1\cap\{ x_n=0 \}}(-e_n) \cdot \nabla w_r (x \cdot \nabla w_r -2w_r)$$

$$+\frac{2\beta}{r}\int_{B_1\cap\{ x_n=0 \}}(e_n) \cdot \nabla w_r (x \cdot \nabla w_r -2w_r) $$

$$ +\frac{1}{r} \int_{B^+_1(0)}( x \cdot \nabla w_r-2w_r)+\frac{1}{r} \int_{B^-_1(0)}( x \cdot \nabla w_r-2w_r)$$

$$-\frac{2\alpha}{r}\int_{\partial^+ B_1(0)} 2w_r(x \cdot \nabla w_r-2w_r)-\frac{2\beta}{r}\int_{\partial^- B_1(0)} 2w_r(x \cdot \nabla w_r-2w_r)$$

$$ =\frac{2\alpha}{r}\int_{\partial^+ B_1(0)} (x \cdot \nabla w_r-2w_r)^2+\frac{2\beta}{r}\int_{\partial^- B_1(0)} (x \cdot \nabla w_r-2w_r)^2$$

$$ +\frac{2\alpha}{r}\int_{B_1\cap\{ x_n=0 \}}(-e_n) \cdot \nabla w_r (x \cdot \nabla w_r -2w_r)$$

$$+\frac{2\beta}{r}\int_{B_1\cap\{ x_n=0 \}}(e_n) \cdot \nabla w_r (x \cdot \nabla w_r -2w_r).$$
The last two terms cancel due to the transmission condition Lemma \ref{TCon}.  Hence, we obtain

$$=\frac{2}{r}\int_{\partial B_1(0)} f(x \cdot \nabla w_r -2w_r)^2 d\mathcal{H}^{n-1}.$$
After rescaling we arrive at the desired result
$$ \Phi'_{x_0}(r)=2r^{-n-2}\int_{\partial B_r(x_0)} f \bigg(\nu \cdot \nabla w -2\frac{w}{r}\bigg)^2 d\mathcal{H}^{n-1} \ge 0$$
for $r \in (0, 1).$
\end{proof}

Similar to Weiss' original paper \cite{W} we now use our monotonicity formula to prove blowup solutions of (\ref{pde}) at free boundary points are homogeneous of degree two.
\begin{proposition} \label{deg2}
Let $w$ be a solution to (\ref{pde}) then we have
\begin{enumerate}
\item[(1)] For all $x_0 \in B_1(0) \cap \{ w=0 \} \cap \{x_n=0\}$ the function $\Phi_{x_0}(r)$ has a real right limit $\Phi_{x_0}(0^+)$
\item[(2)] Let $x_0 \in B_1(0) \cap \{w=0\} \cap \{x_n=0\}$ and $0<\rho_m \longrightarrow 0$ be a sequence such that the blow-up sequence $w_m(x):=\frac{w(x_0+\rho_mx)}{\rho^2_m}$ converges a.e. in $\R^n$ to a blow-up limit $w_0$.  Then $w_0$ is a homogeneous function of degree 2.
\item[(3)] $\Phi_{x_0}(r) \ge 0$ for every $x_0 \in B_1(0) \cap \{w=0\} \cap \{x_n=0\}$ and every $0 \le r <1$.  Equality holds if and only if $w=0$ in $B_r(x_0)$.
\item[(4)] The function $x_0 \longmapsto \Phi_{x_0}(0^+)$  restricted to the set $ \{x_n=0\} $ is upper semi-continuous. 
\end{enumerate}
\end{proposition}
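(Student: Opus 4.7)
The plan is to follow the four-part strategy standard for Weiss-type monotonicity arguments, adapted to the two-sided structure coming from $\alpha$ and $\beta$.

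For \emph{(1)}, monotonicity from Theorem \ref{mono} gives $\Phi_{x_0}$ nondecreasing, so the right limit $\Phi_{x_0}(0^+)$ exists in $[-\infty,\Phi_{x_0}(r_0)]$ for any admissible $r_0$. To exclude $-\infty$ I would invoke quadratic growth of $w$ at free boundary points: from the bounded right-hand side and standard obstacle-problem regularity on each half-space (combined with the transmission condition of Lemma \ref{TCon} to match across the interface), one has $w(x)\leq C|x-x_0|^2$ and $|\nabla w(x)|\leq C|x-x_0|$ near $x_0$. Plugging these into the definition of $\Phi_{x_0}(r)$ makes each of the three terms bounded by a constant independent of $r$, so the limit is finite.

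For \emph{(2)}, the key is the scaling identity $\Phi_{w_m,0}(r)=\Phi_{w,x_0}(\rho_m r)$ coming directly from the definition of $w_m$ together with the homogeneity of the operator and coefficients. Passing $m\to\infty$ for each fixed $r$ shows that the Weiss functional of the blowup limit $w_0$ is identically $\Phi_{x_0}(0^+)$; applying Theorem \ref{mono} to $w_0$ forces the nonnegative integrand $f(\nu\cdot\nabla w_0-2w_0/r)^2$ to vanish on a.e.\ sphere, which is Euler's identity for homogeneity of degree two. Passing the convergence through the integrals that define $\Phi$ is routine: uniform convergence on compact sets (from interior $C^{1,\alpha}$ and $W^{1,2}$ bounds) handles the volume terms, and trace bounds handle the spherical term.

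For \emph{(3)}, I would compute $\Phi_{x_0}(0^+)$ explicitly on the homogeneous-of-degree-two limit $w_0$. Using $\nabla w_0\cdot x=2w_0$ and integration by parts against the equation $L_{\alpha,\beta}w_0=\tfrac{1}{2}\bigrchi_{\{w_0>0\}}$, the interface contributions cancel exactly by the transmission condition of Lemma \ref{TCon}, and after collecting terms one obtains $\Phi_{w_0,0}\equiv\tfrac{1}{2}\int_{B_1\cap\{w_0>0\}}w_0\geq 0$. Combined with monotonicity this yields $\Phi_{x_0}(r)\geq\Phi_{x_0}(0^+)\geq 0$. For the rigidity case, $\Phi_{x_0}(r)=0$ together with monotonicity and $\Phi\geq 0$ forces $\Phi_{x_0}\equiv 0$ on $[0,r]$, hence $\Phi_{x_0}'\equiv 0$; the monotonicity identity then shows $w$ is itself homogeneous of degree two on $B_r(x_0)$, so applying the explicit formula just derived to $w$ gives $\int_{B_r(x_0)\cap\{w>0\}}w=0$, forcing $w\equiv 0$ on $B_r(x_0)$.

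For \emph{(4)}, monotonicity yields $\Phi_{x_0}(0^+)=\inf_{r>0}\Phi_{x_0}(r)$. For each fixed $r>0$ the map $x_0\mapsto\Phi_{x_0}(r)$ is continuous, since $w$ and $|\nabla w|^2$ are locally integrable and translations are continuous in $L^1$. The pointwise infimum of a family of continuous functions is upper semi-continuous, which is exactly what is claimed. The main technical obstacle is the interface bookkeeping in \emph{(3)}: this is the place where the discontinuous coefficient structure could spoil the clean formula, and where Lemma \ref{TCon} is doing the essential work to make the two jump terms cancel.
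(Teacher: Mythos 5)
Your proposal is correct and follows essentially the same route as the paper: monotonicity plus quadratic growth for \emph{(1)}, the scaling identity and vanishing of the Weiss deviation along the blowup for \emph{(2)}, the integration-by-parts computation $\Phi=\tfrac{1}{2}\int w_0\geq 0$ on homogeneous limits (with the interface terms cancelling via Lemma \ref{TCon}) together with the same rigidity argument for \emph{(3)}, and upper semicontinuity as an infimum of the continuous maps $x\mapsto\Phi_x(\rho)$ for \emph{(4)}. The only differences are cosmetic: the paper phrases \emph{(3)} as a contradiction and writes out the $\epsilon$-argument in \emph{(4)}, while you state the nonnegativity directly and invoke the general fact about infima of continuous functions.
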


\begin{proof}[Proof of (1)]
Since $w \in C^{\omega}(\R^n_+) \cap C^{\omega}(\R^n_-)$ we get that $\Phi_{x_0}(r)$ is bounded and non-decreasing from Theorem \ref{mono}, assuming that r is sufficiently small.
\\ \\
\emph{Proof of (2).} First we may assume $\rho_m$'s are sufficiently small, depending on $x_0$ so that Theorem \ref{mono} holds.  Then for all $0<R<S< \frac{1}{\rho_m}$ we have
$$ \int^S_R r^{-n-2} \int_{\partial B_r(0)} 2 \bigg(\nabla w_m \cdot \nu -2 \frac{w_m}{r}\bigg)^2 f \, d \mathcal{H}^{n-1}dr$$

$$= \int^{S\rho_m}_{R\rho_m} \delta^{-n-2} \int_{\partial B_{\delta}(0)} 2 \bigg(\nabla w \cdot \nu -2 \frac{w}{\delta}\bigg)^2 f \, d \mathcal{H}^{n-1}d\delta=\Phi_0(S\rho_m)-\Phi_0(R\rho_m)$$

where $\delta=r \rho_m$.  From \emph{(1)} we have $\Phi_{x_0}(S\rho_m)-\Phi_{x_0}(R\rho_m) \longrightarrow 0$.  This together with the boundedness of $\{w_m\} \subset C^{1,1}_{loc} (\R^{n}_+) \cap C^{1,1}_{loc} (\R^{n}_-) $ implies $w_0$ is homogeneous of degree 2.
\\ \\
\emph{Proof of (3).} First suppose $\Phi_{x_0}(0^+) < 0$ for some $x_0 \in \{x_n=0\} \cap \{ w=0 \}$.  Then $\exists$ a sequence of positive real values $\rho_m \longrightarrow 0$ and $w_m:=\frac{w(x_0+\rho_mx)}{\rho_m^2} \longrightarrow w_0$ in $C^{1,\alpha}_{loc} (\R^{n}_+) \cap C^{1,\alpha}_{loc} (\R^{n}_-)$ such that

$$0 > \lim_{m\rightarrow \infty} \Phi_{x_0}(\rho_m)$$

$$=\lim_{m\rightarrow\infty} \bigg[ \int_{B^+_1(x_0)} (\alpha |\nabla w_m|^2+w_m)+\int_{B^-_1(x_0)} (\beta |\nabla w_m|^2+ w_m)$$

$$-2\int_{\partial^+B_1(0)}\alpha w_m^2 d\mathcal{H}^{n-1} -2\int_{\partial^-B_1(0)}\beta w_m^2 d\mathcal{H}^{n-1} \bigg]$$

$$= \int_{B^+_1(x_0)} (\alpha |\nabla w_0|^2+\alpha w_0)+\int_{B^-_1(x_0)} (\beta |\nabla w_0|^2+\beta w_0)$$

$$-2\int_{\partial^+B_1(0)}\alpha w_0^2 d\mathcal{H}^{n-1} -2\int_{\partial^-B_1(0)}\beta w_0^2 d\mathcal{H}^{n-1}.$$

From \emph{(2)} we know that $w_0$ is homogeneous of degree 2.  An integration by parts yields

$$0>\int_{B^+_1}  w_0(1-\alpha \Delta w_0)+\int_{B^-_1}  w_0(1-\beta \Delta w_0)$$ 

$$+ \int_{\partial^+ B_1(0)} \alpha w_0(\nabla w_0 \cdot x -2w_0) d\mathcal{H}^{n-1}+ \int_{\partial^- B_1} \beta w_0(\nabla w_0 \cdot x -2w_0) d\mathcal{H}^{n-1}$$

$$=\frac{1}{2}\int_{B_1(0)} w_0 \ge 0,$$ 
a contradiction.  Hence, $0  \leq \Phi_{x_0}(0^+) \leq \Phi_{x_0}(r)$.  Finally if $w=0$ in $B_r(x_0)$ then obviously $\Phi_{x_0}(r)=0.$ On the other hand if $\Phi_{x_0}(r)=0$ then $\Phi'_{x_0}(r)=0$, which implies $w$ is homogeneous of degree 2.  Then following as before
$$0=\Phi_{x_0}(r)=\frac{1}{2}r^{-n-2}\int_{B_r} w \ge 0 .$$
Hence, $w=0$ in $B_r$.
\\ \\
\emph{Proof of (4).} Let $\epsilon >0$ and $x \in \{x_n=0\}$.  If $\Phi_{x_0}(0^+)>-\infty$ then there exists $\rho$ such that $\Phi_{x_0}(\rho) + \frac{\epsilon}{2} \leq \Phi_{x_0}(0^+)+\epsilon$.  Otherwise, if $\Phi_{x_0}(0^+)=-\infty$ then $\exists$ $m<\infty$ such that $\Phi_{x_0}(\rho) + \frac{\epsilon}{2} \leq -m$.  Now, considering $\rho$ to be fixed, if $|x-x_0|$ is sufficiently small we have $\Phi_x(\rho) \leq \Phi_{x_0}(\rho) + \frac{\epsilon}{2}$.  Also, from Theorem \ref{mono} we have $\Phi_x(0^+) \leq \Phi_x(\rho)$.  Hence we have, 
$$\Phi_x(0^+)\leq
\begin{cases} \Phi_{x_0}(0^+)+\epsilon, & \Phi_{x_0}(0^+)>- \infty \\ -m, & \Phi_{x_0}(0^+)=-\infty
\end{cases}
$$
by first choosing $\rho$ then $|x-x_0|$ small enough.
\end{proof}


\newsec{Angle Conditions and Nonconvexity}{Noncon}

In this section, working in $\R^2$, we will explicitly construct the blowup solutions at free boundary points on the interface to (\ref{pde}).  This classification will lead to an interesting condition on how the free boundary can cross the interface.  In particular it will provide an angle condition that depends only on the given $\alpha$ and $\beta$ from the operator $L_{\alpha, \beta}$.  In other words, the free boundary must cross the interface in a very particular way.  Indeed, we expected the angle of incidence and angle of reflection to be related, but that one angle would be free. However, we find that both angles are completely determined by the given $\alpha$ and $\beta.$ Using this angle condition and the measure convergence in Theorem \ref{conv} we will be able to construct examples of mean value sets that must be nonconvex.

Note that if $w$ is a solution to (\ref{obprob2}) then locally, away from the singularity, $G_k(x,y)-w$ is a solution to (\ref{pde}).  Hence, proposition \ref{deg2} implies that blowup solutions to $G_k(x,y)-w$ at free boundary points are homogeneous of degree two.  If we then restrict to working in $\R^2$ these blowup solutions have the form $r^2g(\theta)$.  Using this fact the next lemma computes such blowup solutions explicitly, in doing so we derive an angle condition for mean value sets as they cross the interface, $\{x_n=0 \}$.

\begin{lemma}[Angle Condition Across the Interface] \label{anglecon}
Let $v_0=r^2g(\theta)$ be a blow-up solution to (\ref{pde}) at the origin and assume $\exists$ $\theta_1$, $\theta_2 \in (0,\pi)$ such that $g(\theta)=0$ $\forall$ $\theta \in [0,\pi-\theta_1] \cup [\pi+\theta_2, 2\pi]$.
 
$$g(\theta):= 
\begin{cases}
\frac{1}{8\alpha}[1-\cos(2\theta_1)\cos(2\theta)+\sin(2\theta_1)\sin(2\theta)] & \quad \pi-\theta_1 \leq \theta \leq \pi \\
\frac{1}{8\beta}[1-\cos(2\theta_2)\cos(2\theta)-\sin(2\theta_2)\sin(2\theta)] & \quad \pi \leq \theta \leq \pi + \theta_2 \\
0 & \quad \text{otherwise}
\end{cases}$$

$$\theta_2=
\begin{cases}
\theta_1+\frac{\pi}{2} & \quad 0<\theta_1<\frac{\pi}{2} \\
\theta_1-\frac{\pi}{2} & \quad \frac{\pi}{2}<\theta_1<\pi \\
\end{cases} \qquad \text{and} \qquad \cos(2\theta_1)=\frac{\beta-\alpha}{\beta+\alpha}$$

\end{lemma}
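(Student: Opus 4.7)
The plan is to reduce the problem to two linear ODEs on circular arcs and match them through the transmission condition. Since $v_0 = r^2 g(\theta)$ is homogeneous of degree $2$, the remark following (\ref{pde}) gives $\alpha \Delta v_0 = 1/2$ classically on the wedge $\{\pi - \theta_1 < \theta < \pi\}$ where $v_0 > 0$, and similarly $\beta \Delta v_0 = 1/2$ on the lower wedge. Substituting the identity $\Delta(r^2 g(\theta)) = 4 g(\theta) + g''(\theta)$ reduces these to the decoupled inhomogeneous ODEs $g'' + 4g = 1/(2\alpha)$ and $g'' + 4g = 1/(2\beta)$ on the respective intervals, each with general solution $A\cos(2\theta) + B\sin(2\theta) + c/4$.

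Next, I would determine the constants $A, B$ from free boundary conditions. Because $g \equiv 0$ on the complement of the wedge and $L_{\alpha,\beta} v_0$ is bounded, $v_0$ is $C^{1,1}$ across each free boundary ray, forcing $g(\pi - \theta_1) = g'(\pi - \theta_1) = 0$ and $g(\pi + \theta_2) = g'(\pi + \theta_2) = 0$. Plugging these four conditions into the two general solutions yields a pair of $2 \times 2$ linear systems for $(A_1, B_1)$ and $(A_2, B_2)$, whose unique solutions produce exactly the two piecewise formulas for $g$ claimed in the statement.

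With the formulas for $g$ in hand, the two transmission conditions at $\theta = \pi$ fix the remaining relations between $\theta_1$, $\theta_2$, $\alpha$, and $\beta$. Continuity of $g$ at $\theta = \pi$ gives $\beta(1 - \cos(2\theta_1)) = \alpha(1 - \cos(2\theta_2))$. For the flux condition $\alpha\, \partial_{x_2} v_0|_{x_2 = 0^+} = \beta\, \partial_{x_2} v_0|_{x_2 = 0^-}$ from Lemma \ref{TCon}, I would compute $\partial_{x_2}(r^2 g(\theta)) = 2r \sin(\theta) g(\theta) + r \cos(\theta) g'(\theta)$, which on the negative $x_1$-axis ($\theta = \pi$) reduces to $x_1 g'(\pi)$, so after dividing by $x_1$ the flux condition becomes $\alpha g'(\pi^-) = \beta g'(\pi^+)$. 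A short direct computation with the explicit expressions for $g$ collapses this to $\sin(2\theta_1) = -\sin(2\theta_2)$.

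The main obstacle will be the final trigonometric case analysis. On $(0,\pi) \times (0,\pi)$, the equation $\sin(2\theta_1) = -\sin(2\theta_2)$ admits exactly the branches $\theta_2 = \pi - \theta_1$, $\theta_2 = \theta_1 + \pi/2$ (requiring $\theta_1 < \pi/2$), and $\theta_2 = \theta_1 - \pi/2$ (requiring $\theta_1 > \pi/2$). Substituting $\theta_2 = \pi - \theta_1$ into the continuity relation uses $\cos(2\theta_2) = \cos(2\theta_1)$ and forces either $\alpha = \beta$ or the degenerate values $\theta_1 \in \{0, \pi\}$, so this branch is excluded. The remaining branches satisfy $\cos(2\theta_2) = -\cos(2\theta_1)$, and the continuity relation then collapses to the stated $\cos(2\theta_1) = (\beta - \alpha)/(\alpha + \beta)$. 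The choice of sign in $\theta_2 = \theta_1 \pm \pi/2$ is fixed by requiring $\theta_2 \in (0,\pi)$, producing the two regimes $0 < \theta_1 < \pi/2$ and $\pi/2 < \theta_1 < \pi$ exactly as in the lemma.
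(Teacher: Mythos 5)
Your proposal is correct and follows essentially the same route as the paper: reduce to the ODE $g''+4g=\tfrac{1}{2\alpha}$ (resp. $\tfrac{1}{2\beta}$) on each wedge, fix the constants from $g=g'=0$ on the free boundary rays, and then use the flux and continuity transmission conditions at $\theta=\pi$ to obtain $\sin(2\theta_1)=-\sin(2\theta_2)$ and $\cos(2\theta_1)=\frac{\beta-\alpha}{\beta+\alpha}$. The only difference is that you explicitly rule out the extra branch $\theta_2=\pi-\theta_1$ of the sine equation (which the paper passes over silently), a small but welcome extra care when $\alpha\neq\beta$.
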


\begin{proof}
Such a $g$ must satisfy the following conditions:

\begin{enumerate}
\item $g''(\theta)+4g(\theta)= \frac{1}{2 \alpha}$ for $\theta \in (\pi-\theta_1,\pi)$
\item $g''(\theta)+4g(\theta)= \frac{1}{2 \beta}$ for $\theta \in (\pi,\pi + \theta_2)$
\item $g(\pi-\theta_1)=g(\pi+\theta_2)=0$
\item $g'(\pi-\theta_1)=g'(\pi+\theta_2)=0$
\item $\alpha g'(\pi^-)=\beta g'(\pi^+)$
\item $g(\pi^-)=g(\pi^+)$
\end{enumerate}
Conditions 1 and 2 give us
$$ g(\theta):=
\begin{cases}
\frac{1}{8 \alpha}+C_1 \cos(2\theta)+C_2 \sin(2\theta) &\pi- \theta_1<\theta<\pi \\
\frac{1}{8 \beta}+D_1 \cos(2\theta)+D_2 \sin(2\theta) & \pi<\theta<\pi + \theta_2
\end{cases}
$$
Then conditions 3 and 4 give
$$C_1=\frac{-\cos(2\theta_1)}{8\alpha} \quad C_2=\frac{\sin(2\theta_1)}{8 \alpha} \quad D_1=\frac{-\cos(2\theta_2)}{8 \beta} \quad D_2=\frac{-\sin(2 \theta_2)}{8 \beta} $$
Now condition 5 implies
$$-\sin(2\theta_2)=\sin(2\theta_1)$$
which then gives
$$\theta_2=
\begin{cases}
\theta_1+\frac{\pi}{2} & 0 < \theta_1 < \frac{\pi}{2} \\
\theta_1-\frac{\pi}{2} & \frac{\pi}{2} < \theta_1 < \pi
\end{cases}. 
$$
Finally using condition 6 we get
$$\frac{1-\cos(2\theta_2)}{\beta}=\frac{1-\cos(2\theta_1)}{\alpha}$$
Hence we have
$$\cos(2\theta_1)=\frac{\beta-\alpha}{\beta+\alpha}$$.
\end{proof}

\begin{remark} \label{anglecon2}
If in the previous lemma we instead assumed that $\Lambda(v_0)$ was on the ``left" side we get very much the same result.  As in
$$g(\theta):= 
\begin{cases}
\frac{1}{8\alpha}[1-\cos(2\theta_1)\cos(2\theta)+\sin(2\theta_1)\sin(2\theta)] & \quad 0 \leq \theta \leq \pi-\theta_1 \\
\frac{1}{8\beta}[1-\cos(2\theta_2)\cos(2\theta)-\sin(2\theta_2)\sin(2\theta)] & \quad \pi+\theta_2 \leq \theta \leq 2\pi\\
0 & \quad \text{otherwise}
\end{cases}$$
with the same definitions for $\theta_1$ and $\theta_2$.  Hence, assuming $\Lambda(v_0)$ crosses the interface, we only have nine distinct cases.  Six possible blowup solutions come from the angle conditions in Lemma \ref{anglecon} and three are the standard half plane and whole plane solutions with the free boundary on the interface.
\end{remark}

\begin{figure}[!h]
\centering
\begin{subfigure}{.3\linewidth}
    \centering
    \includegraphics[scale=.20]{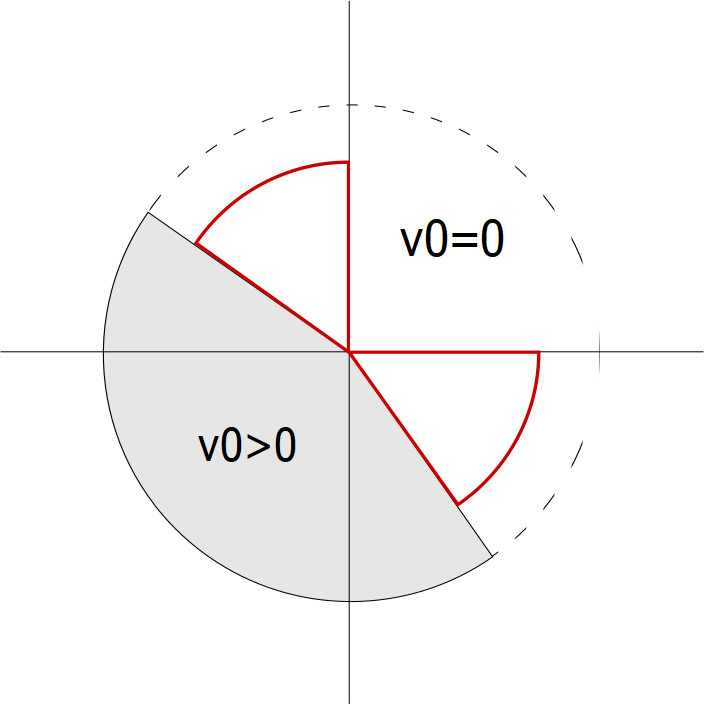}
    \caption{}\label{fig1:image1}
\end{subfigure}
    \hfill
\begin{subfigure}{.3\linewidth}
    \centering
    \includegraphics[scale=.20]{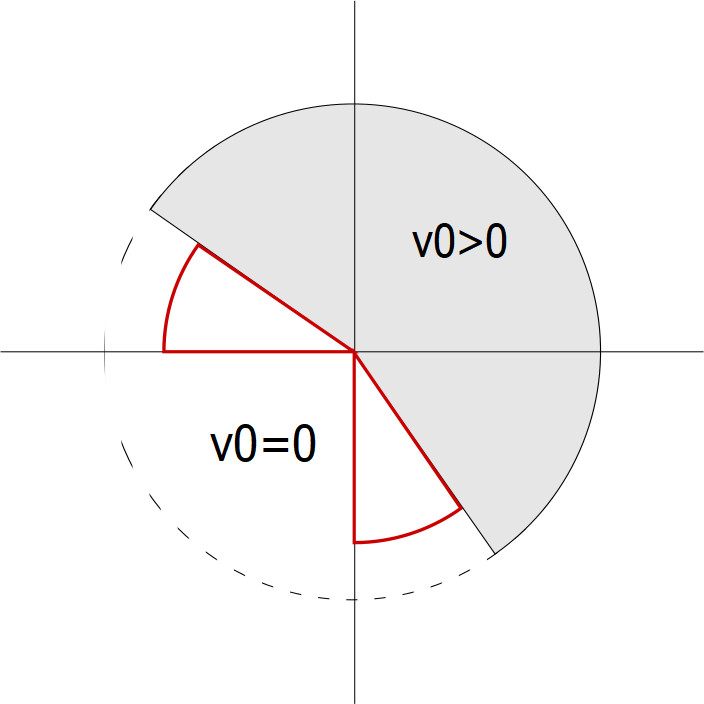}
    \caption{}\label{fig1:image2}
\end{subfigure}
   \hfill
\begin{subfigure}{.3\linewidth}
    \centering
    \includegraphics[scale=.20]{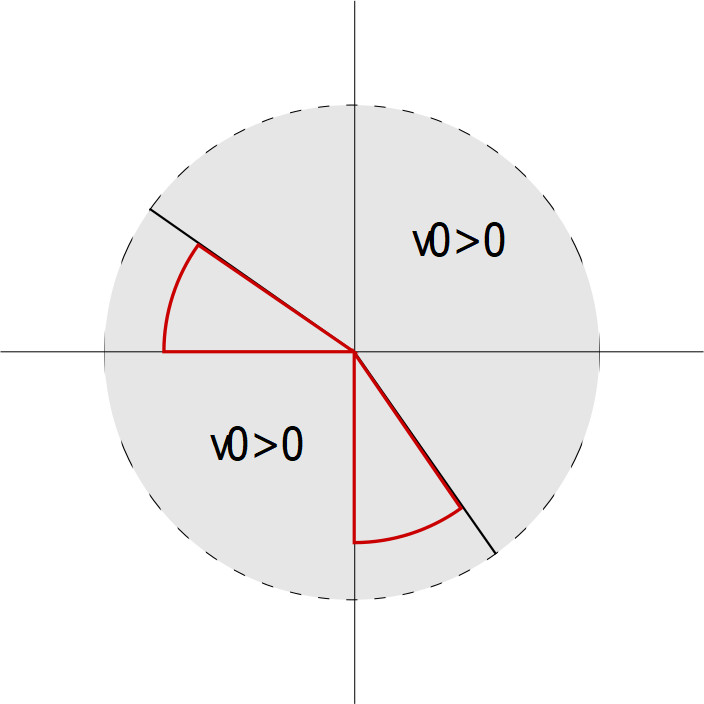}
    \caption{}\label{fig1:image3}
\end{subfigure}
	\vfill
\begin{subfigure}{.3\linewidth}
    \centering
    \includegraphics[scale=.20]{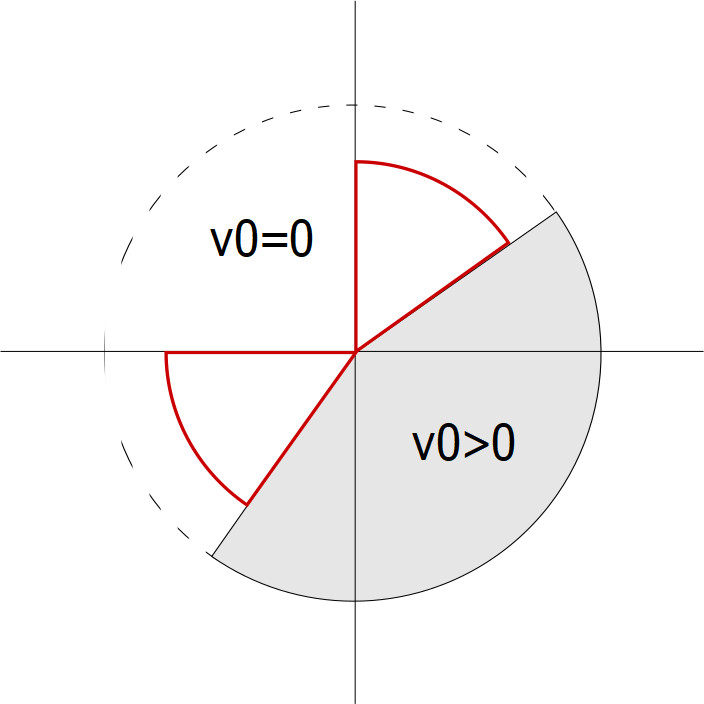}
    \caption{}\label{fig1:image4}
\end{subfigure}
	\hfill
\begin{subfigure}{.3\linewidth}
    \centering
    \includegraphics[scale=.20]{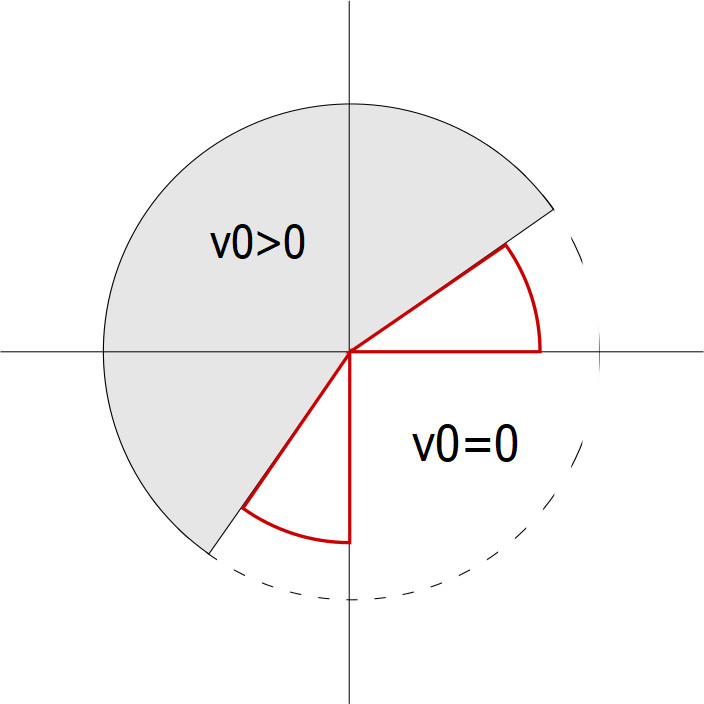}
    \caption{}\label{fig1:image5}
\end{subfigure}
	\hfill
\begin{subfigure}{.3\linewidth}
    \centering
    \includegraphics[scale=.20]{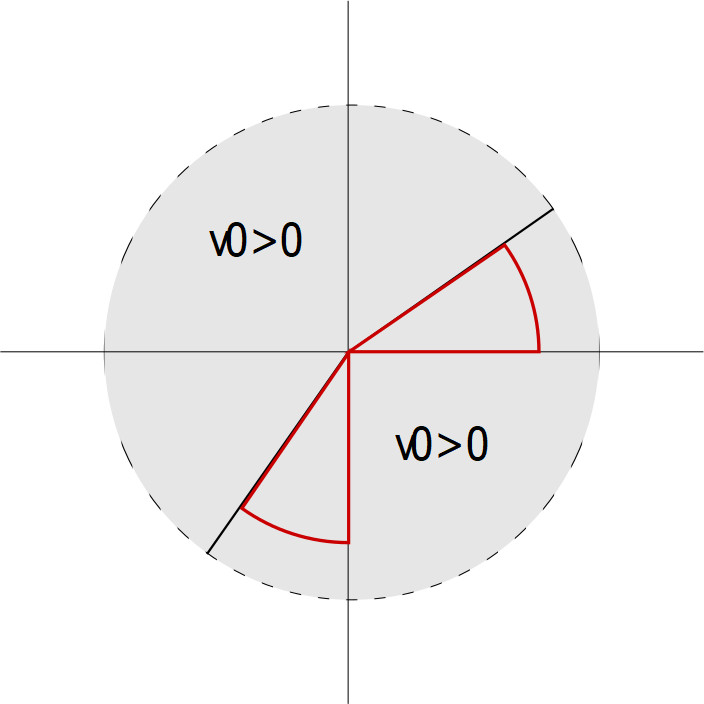}
    \caption{}\label{fig1:image6}
\end{subfigure}
	\vfill
\begin{subfigure}{.3\linewidth}
    \centering
    \includegraphics[scale=.20]{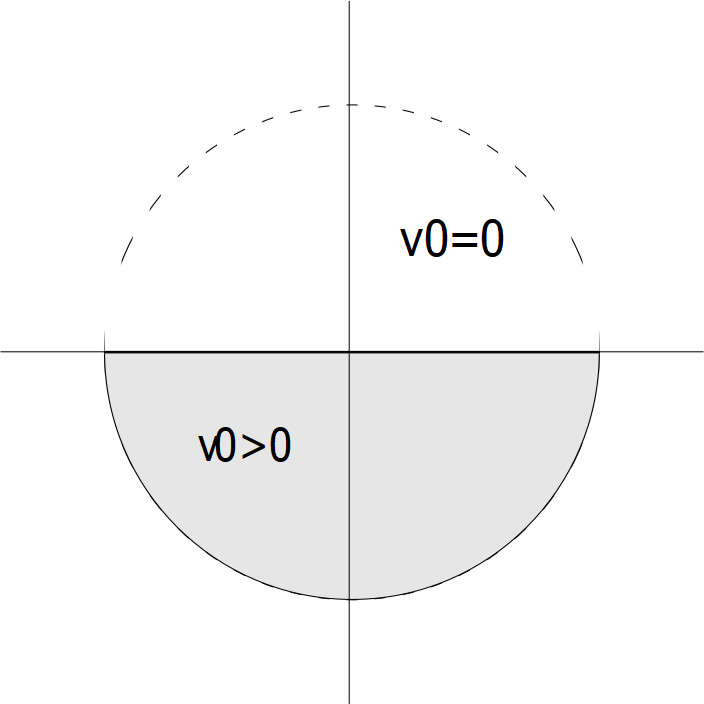}
    \caption{}\label{fig1:image7}
\end{subfigure}
	\hfill
\begin{subfigure}{.3\linewidth}
    \centering
    \includegraphics[scale=.20]{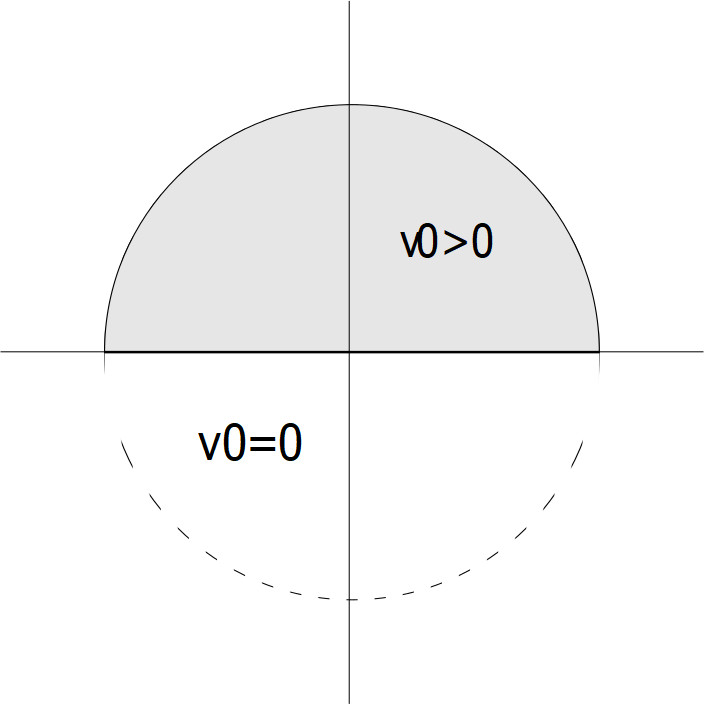}
    \caption{}\label{fig1:image8}
\end{subfigure}
	\hfill
\begin{subfigure}{.3\linewidth}
    \centering
    \includegraphics[scale=.20]{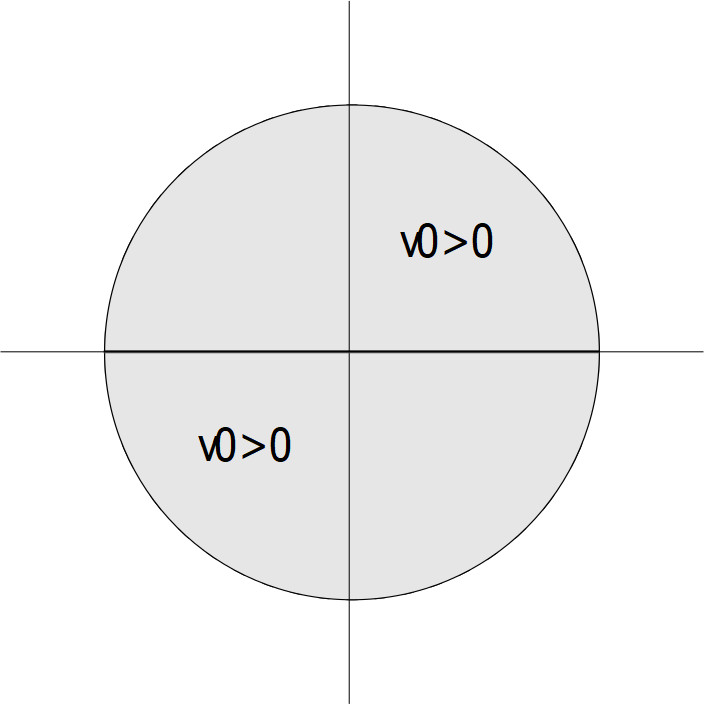}
    \caption{}\label{fig1:image9}
\end{subfigure}	
\caption{The nine possible blowup limits at a free boundary point for fixed $\alpha$ and $\beta$, assuming the positivity set crosses the interface} \label{blowups}	
\end{figure}

At this point it is very easy to believe that convexity of our mean value sets will not always be possible.  The angle condition in Lemma \ref{anglecon} depends only on $\alpha$ and $\beta$ and as $\alpha$ converges to $\beta$ we have $(\theta_1, \theta_2)$ converging to either $(\frac{\pi}{4},\frac{3\pi}{4})$ or $(\frac{3\pi}{4},\frac{\pi}{4})$.  Thus, the boundaries of our mean values sets must satisfy such an angle condition as they cross the interface or become tangent to the interface, but we also have from Theorem \ref{conv} that these mean value sets converge in measure to Euclidean balls as $\alpha$ and $\beta$ converge to one.  Then by picking an appropriate center for the Euclidean balls they will cross the interface with any angle we desire.  This angle condition with the measure convergence statement in Theorem \ref{conv} will force $\partial D_R$ to curve rapidly back towards $\partial B_R$ forcing $D_R$ to be nonconvex.  Before we can prove this we first need a small lemma to ensure the convergence of the contact and noncontact set as we rescale our solutions.

\begin{lemma} \label{FBconv}
Let $v$ be a solution to equation \ref{pde} and $0<\rho_m \rightarrow 0$ be a sequence such that the blowup sequence $v_m(x):=\frac{v(\rho_m x)}{\rho_m^2}$ converges $a.e.$ in $\R^n$ to a blowup limit $v_0$.  Then for every $\epsilon>0$ there exists an $M$ such that for any point $p \in \Omega(v_0)\cap B_1$ that is at least $\epsilon$ away from $FB(v_0)$ we have $p \in \Omega(v_m) \cap B_1$ for all $m \geq M$. Furthermore, for any point $q \in \Lambda(v_0)\cap B_1$ that is at least $\epsilon$ away from $FB(v_0)$ we have $q \in \Lambda(v_m) \cap B_1$ for all $m \geq M$.
\end{lemma}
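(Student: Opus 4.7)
\medskip
\noindent\textbf{Proof proposal.} The plan is to argue both halves by contradiction, powered by two ingredients: a promotion of the a.e.\ convergence $v_m\to v_0$ to locally uniform convergence on $\R^n$, and the uniform nondegeneracy of solutions to \eqref{pde} recorded in \cite{BH1}. The first ingredient comes essentially for free because obstacle-problem regularity gives a uniform $C^{1,1}_{\mathrm{loc}}(\R^n_+)\cap C^{1,1}_{\mathrm{loc}}(\R^n_-)$ bound on the family $\{v_m\}$, so Arzel\`a--Ascoli produces a locally uniform limit which, by a.e.\ uniqueness of the limit, must coincide with $v_0$. The substance of the argument below is arranging the constant $M$ to be independent both of the candidate point and of the index $m$; this is precisely where the uniformity of the nondegeneracy constant is needed.

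For the first assertion I would assume, toward a contradiction, that some $\epsilon>0$ admits a subsequence $m_j\to\infty$ and points $p_j\in\Omega(v_0)\cap B_1$ with $\mathrm{dist}(p_j,FB(v_0))\ge\epsilon$ yet $v_{m_j}(p_j)=0$ (recall $v_{m_j}\ge 0$). Extracting a convergent sub-subsequence $p_j\to p_\infty\in\overline{B_1}$, locally uniform convergence forces $v_0(p_\infty)=\lim_j v_{m_j}(p_j)=0$, so $p_\infty\in\Lambda(v_0)$. But $B_\epsilon(p_j)\subset\Omega(v_0)$, and once $|p_j-p_\infty|<\epsilon/2$ this places $p_\infty$ in the open set $\Omega(v_0)$; the two conclusions are incompatible.

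For the second assertion I would again suppose, for contradiction, that there exist $q_j\in\Lambda(v_0)\cap B_1$ with $\mathrm{dist}(q_j,FB(v_0))\ge\epsilon$ and $v_{m_j}(q_j)>0$. After passing to $q_j\to q_\infty$, the disjointness $B_\epsilon(q_j)\cap FB(v_0)=\emptyset$ together with connectedness and $q_j\in\Lambda(v_0)$ forces $B_\epsilon(q_j)\subset\Lambda(v_0)$, so that $B_{\epsilon/2}(q_\infty)\subset\Lambda(v_0)$ and $v_0\equiv 0$ there. Locally uniform convergence then yields $\sup_{B_{\epsilon/2}(q_\infty)}v_{m_j}\to 0$. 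On the other hand, because the piecewise constant $a^{ij}$ is invariant under the rescaling $x\mapsto\rho_m x$, each $v_m$ solves \eqref{pde} with the same ellipticity constants and source, so the uniform nondegeneracy bound from \cite{BH1}, applied at $q_j\in\overline{\Omega(v_{m_j})}$, yields a constant $c=c(\lambda,\Lambda,n)>0$ with $\sup_{B_r(q_j)}v_{m_j}\ge c\,r^2$ for every sufficiently small fixed $r$. Taking, e.g., $r=\epsilon/4$, the inclusion $B_r(q_j)\subset B_{\epsilon/2}(q_\infty)$ valid for large $j$ produces the required contradiction and completes the sketch; the hardest part is simply the bookkeeping that ensures the nondegeneracy constant can be chosen once and for all, independently of $m$ and of $q$.
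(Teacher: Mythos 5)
Your proposal is correct and follows essentially the same route as the paper: upgrade the a.e.\ convergence to locally uniform convergence via the uniform regularity of the $v_m$, handle points of $\Omega(v_0)$ using the positivity of $v_0$ away from $FB(v_0)$ together with uniform convergence, and handle points of $\Lambda(v_0)$ by playing the uniform (scale- and $m$-independent) nondegeneracy estimate against uniform convergence on a ball where $v_0\equiv 0$. The only differences are cosmetic --- you phrase the first part as a compactness/contradiction argument rather than a direct positive lower bound, and you apply nondegeneracy at points of $\Omega(v_{m_j})$ instead of at free boundary points of $v_{m_j}$ --- neither of which changes the substance.
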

\begin{proof}
Note that there exists a subsequence of $v_m$, which we again call $v_m$, that converges to $v_0$ uniformly on compact sets.  Also, we have $v_m$ converging to $v_0$ in $C_{loc}^{1,\alpha}(\R_+^n)$ and in $C_{loc}^{1,\alpha}(\R_-^n)$. 

Denote the $\epsilon$-neighborhood of a set $S$ as $N_{\epsilon}(S)$.  We first prove the statement for points in $\Omega(v_0)$.  Outside of $N_{\epsilon}(FB(v_0) \cap B_1)$ in $\Omega(v_0)$ we have $v_0>M$ for some constant $M>0$.  Since $v_m$ converges uniformly to $v_0$ in $B_1$ there exists a $K$ such that $|v_m-v_0|<\delta<M$ in $B_1$.  Hence, $FB(v_m) \cap B_1$ can not be outside of the $\epsilon$-neighborhood of $FB(v_0) \cap B_1$ in $\Omega(v_0)$.

Now we prove the statement for points in $\Lambda(v_0)$.  If we suppose the statement is false, then there exists an $\epsilon>0$ so that for any $m$ there is a point $p_m \in FB(v_m) \cap \Lambda(v_0) \cap B_1$ such that it is at least $\epsilon$ away from $FB(v_0)$.  However, nondegeneracy implies
$$C \bigg( \frac{\epsilon}{2} \bigg)^2 \leq \sup_{x \in B_{\epsilon/2}(p_m)}v_m.$$
Hence, for any $m$ there exists a point $q_m \in \Lambda(v_0)$ such that $v_m(q_m) \geq C( \frac{\epsilon}{2})^2$ which contradicts uniform convergence.
\end{proof}

\begin{proof}[Proof of Theorem \ref{nonconv}]
Without loss of generality we will assume $x_0 = 0.$  Also, for simplicity
we will assume $y_0 = 0.$  This assumption is only to ensure $B_{R}(0,y_0)$
does not satisfy the angle condition in Lemma \ref{anglecon} and to ensure the sets cross the interface. It should be clear from the proof how to adapt this to any $y_0 \neq 0$ as long as $\partial B_R$ does not satisfy the angle condition as it crosses the interface.

We wish to show that there exists a point $q_k := (\tilde{q}_k, 0) \in \partial D_{R;k}(0,0)$ such that
$|-R - \tilde{q}_k| \rightarrow 0$ as $k \rightarrow \infty.$  Note that we know such a point
$q_k \in \partial D_{R;k}(0,0)$ exists due to Theorem \ref{conv} and the fact that $D_{R;k}(0,0)$
has exactly one component.  (See [AB] Lemma 2.5.)  If we assume that $|-R - \tilde{q}_k| \nrightarrow 0$
then since all of the $D_{R;k}(0,0)$ are contained in one fixed large ball we can extract a convergent
subsequence of the $q_k.$  Calling the new subsequence $\{q_k\}$ still, we must have that the density
of $D_{R;k}(0,0)$ at $q_k$ converges to either $1$ or $0$ as $k \rightarrow \infty$ according to
whether the $q_k$ converge to a point inside or outside of $B_R$ respectively.  In either case, it is
clear that the resulting set cannot remain convex while approaching $B_R$ in measure.
See Figure \ref{cusps} where a dotted line is drawn in the zoomed picture which must start and end within
$D_{R;k}(0,0),$ but which (after a slight adjustment if needed) will necessarily contain points of the
complement in order to not violate the measure stability theorem.  Thus, we may assume that
$|-R - \tilde{q}_k| \rightarrow 0.$

\begin{figure}[!h] 
	\centering
	\scalebox{.80}{\includegraphics{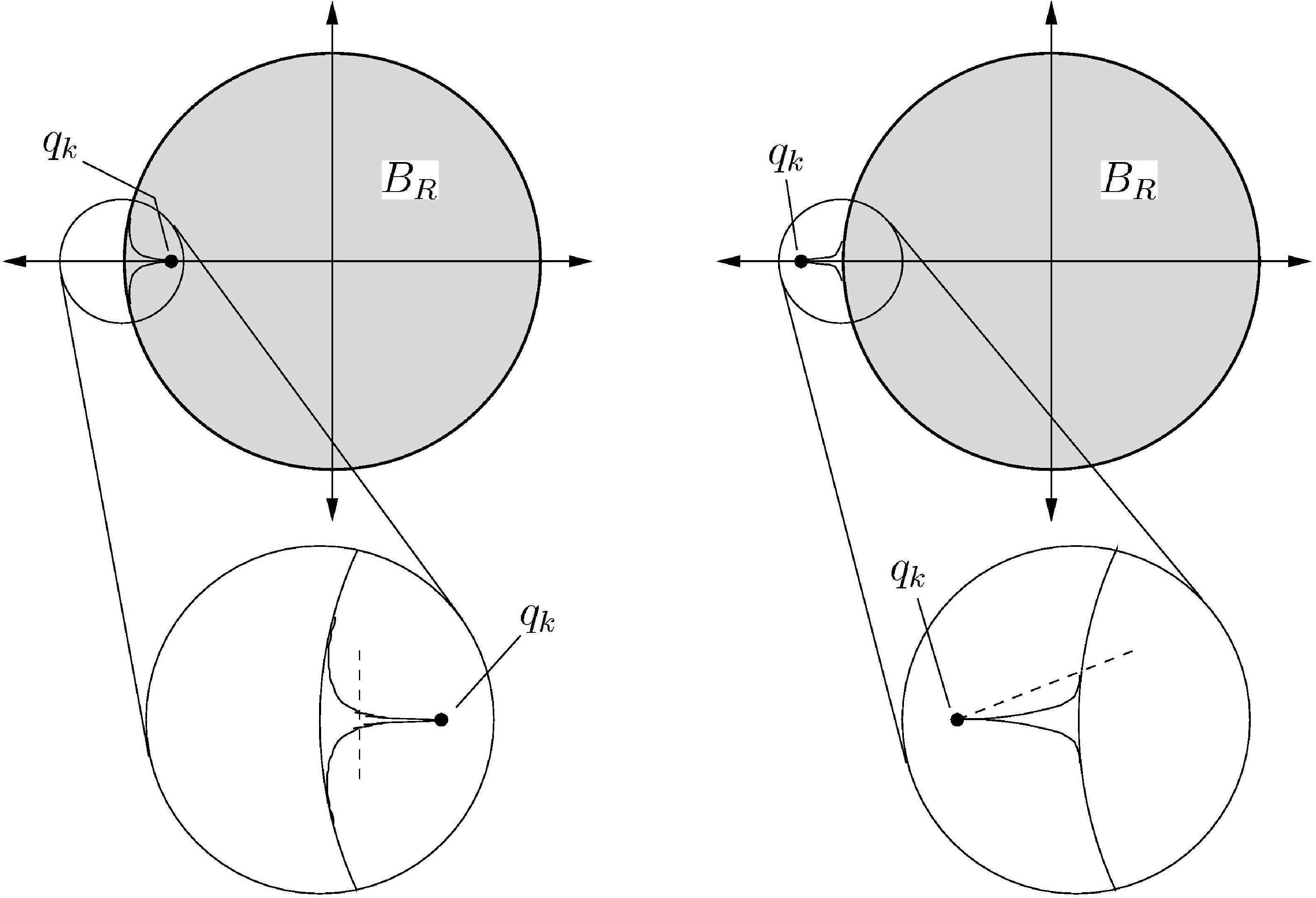}}
	\caption{Cusps when $q_k$ does not converge to $-R$}
	\label{cusps}
\end{figure}

By the argument above, for any arbitrarily small $ \gamma > 0,$ there exists a $K > 0$ such that
$$k \geq K \Rightarrow |-R - \tilde{q}_k| < \gamma.$$  Zooming in at $q_k$ we can be sure that our solution
converges to one of the blowup limits in Figure \ref{blowups}.  Note that blowup solutions as in Figure \ref{fig1:image3}, \ref{fig1:image6}, and \ref{fig1:image9} immediately lead to the nonconvexity of $D_{R;k}(0,0).$  The remainder of these cases can be dealt with similarly, so we will
assume that there is a subsequence converging to a blowup limit of the variety in Figure \ref{fig1:image4} and leave the other cases as an exercise for the interested reader.  Now within this setting, we observe
that $k$ can be chosen large enough so that the following quantities are as small as we like:
\begin{enumerate}
     \item $|-R - \tilde{q}_k|,$
     \item $|\theta_1 - \frac{\pi}{4}|,$ and
     \item $|D_{R;k}(0,0) \, \Delta \, B_R|.$
\end{enumerate}
After fixing $k$ sufficiently large to make the three quantities above adequately small, we can then use
Lemma \ref{FBconv} so that the distance from the free boundary within $B_{\rho}(q_k)$ to the line
with slope one through $q_k$ is shrinking faster than $\rho.$  In short, the zoomed picture within Figure \ref{nonconfig}
is as accurate as we like.  Now however, convexity together with measure stability would give us a
contradiction as we can use convexity to show that in the picture of $B_R$ in the figure, as much of the region
above the dotted line as we want cannot be part of the noncontact set.  On the other hand, the measure of that
region is $\left(\frac{\pi-2}{4}\right)R^2 > 0,$ and that leads to a contradiction with measure stability and our
assumption that $k$ is sufficiently large.

\begin{figure}[!h]
	\centering
	\scalebox{.6}{\includegraphics{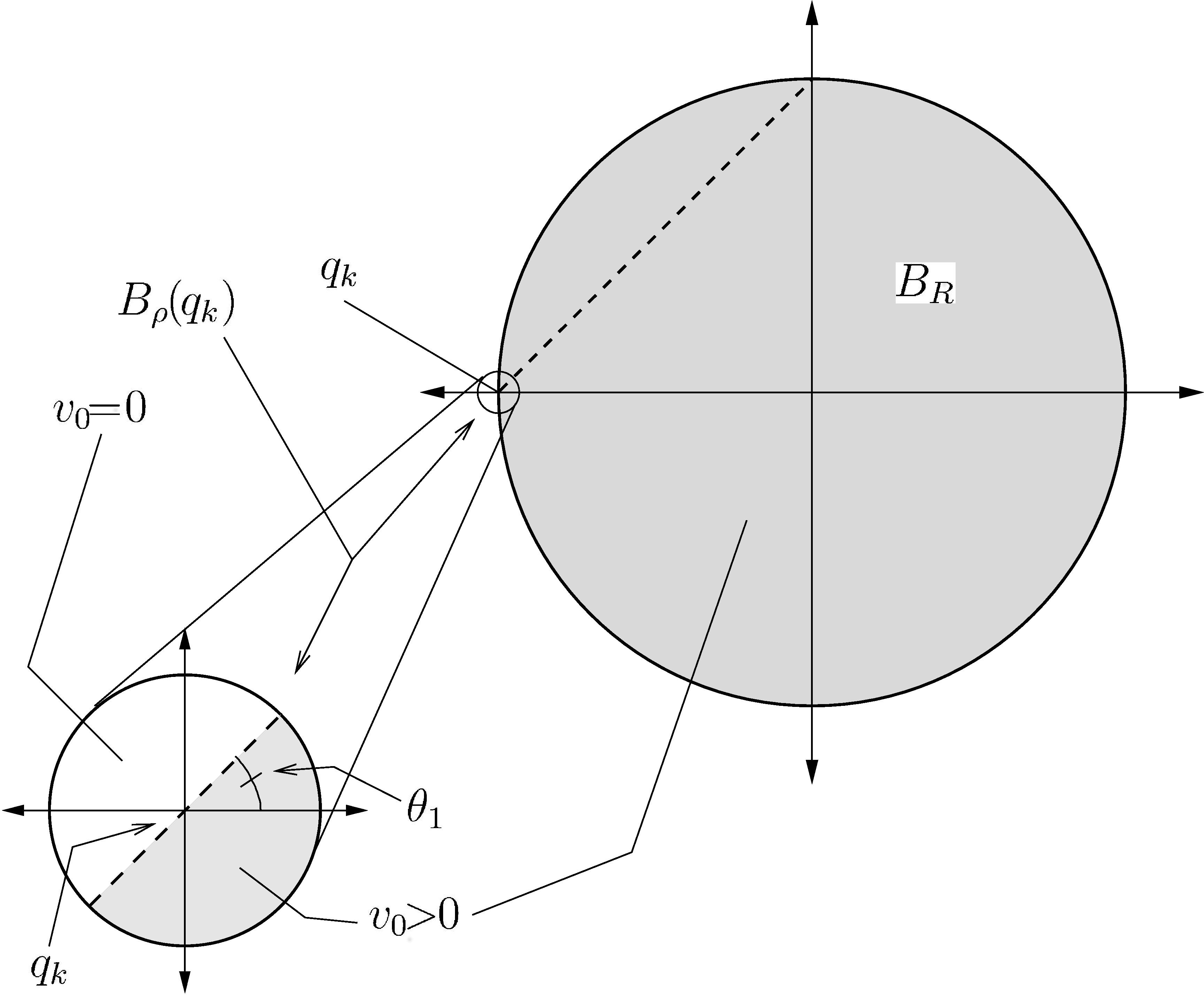}}
	\caption{Set above dotted line contradicts measure stability}
	\label{nonconfig}
\end{figure}
\end{proof}


\newsec{Nonconvexity Continued}{Noncon cont}

In the previous section the discontinuous structure of our $a^{ij}_k(x)$'s seemed to played a major role in the nonconvexity of the associated mean value sets.  Here we will show that in taking smooth approximations of such $a^{ij}_k(x)$'s we will still have nonconvex mean value sets, even though the principal coefficients of the operators are no longer discontinuous.  In light of these examples we now expect that operators whose mean value sets are all convex to be the exception and not the rule.

Let $b^{ij}_{s;k}(x,y)$ be a smooth approximation of $a^{ij}_k(x,y)$ by convolving with a mollifier.  Denote the Green's function associated to $b^{ij}_{s;k}(x,y)$ by $G_{s;k}(x,y)$.  Then from Theorem 5.4 in \cite{LSW} we know that $G_{s;k}(x,y)$ converges uniformly to $G_k(x,y)$ on compact sets away form the singularity as $s \rightarrow \infty.$  Hence, the methods and results in Section 3 can be directly carried over to this setting. In particular we get an analog of Theorem \ref{conv}.

Before proving our final nonconvexity statement we will need a lemma to ensure the regions causing $D_{R;k}(x_0,y_0)$ to be nonconvex are not of measure zero.  Obviously if this were the case then the measure convergence of the mean value sets would not guarantee nonconvexity.  For simplicity we will drop the dependence of $R$ for the notation and simply write $D_k(x_0,y_0)$ and $D_{s;k}(x_0,y_0)$ for the mean value sets associated the operators whose principal coefficients are $a^{ij}_k(x,y)$ and $b^{ij}_{s;k}(x,y)$ respectively.

\begin{lemma} \label{thickcontact}
Let $a^{ij}_k(x,y)$ and $D_k(x_0,y_0)$ be as above.  Then for almost every choice of $y_0,$ there exists a constant $K>0$ such that for all $k>K$ we have an open set $E \subset D_k^c(x_0,y_0)$ with $|E|>0$ and for every point $p \in E$ there exists a line segment containing $p$ starting and ending in $int(D_k(x_0,y_0)).$
\end{lemma}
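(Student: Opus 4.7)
My plan is to refine the geometric construction used in the proof of Theorem \ref{nonconv} so as to produce not just a single line segment witnessing nonconvexity, but an entire open wedge of such segments, of positive Lebesgue measure. For almost every $y_0$ the Euclidean circle $\partial B_R(x_0,y_0)$ crosses the interface $\{y=0\}$ at an acute angle $\theta^\star$ different from the limiting ``critical'' angles $\pi/4$ and $3\pi/4$ coming out of Lemma \ref{anglecon} as $\alpha_k,\beta_k\to 1$. I would exclude the measure-zero set of $y_0$ for which $\theta^\star\in\{\pi/4,3\pi/4\}$ (together with the two values $\pm R$ for which there is no transverse crossing at all), fix such a generic $y_0$, and write $p^\star=(x_0-\sqrt{R^2-y_0^2},0)$ for a crossing point of $\partial B_R$ with the interface.

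By Theorem \ref{conv} and the one-component property of $D_k$, there is a sequence of interface points $q_k\in\partial D_k(x_0,y_0)\cap\{y=0\}$ with $q_k\to p^\star$. Blowing up the height function $v_k:=G_k(\cdot,(x_0,y_0))-w_k$ at $q_k$, Proposition \ref{deg2} and Lemma \ref{anglecon} yield a subsequential blowup limit from the list in Figure \ref{blowups}, with $\theta_{1,k}\to\pi/4$ and $\theta_{2,k}\to 3\pi/4$ (or a symmetric configuration). Together with Lemma \ref{FBconv}, this forces the free boundary $\partial D_k$ inside a ball $B_\rho(q_k)$ to lie within $o(\rho)$ of the two rays leaving $q_k$ at these near-$\pi/4$ and near-$3\pi/4$ angles to the interface, while $\partial B_R$ at the same scale is within $O(\rho^2)$ of its tangent line at $p^\star$, which makes the generic angle $\theta^\star$ with the interface. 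Since both $|\theta^\star-\pi/4|$ and $|\theta^\star-3\pi/4|$ are bounded below by some $\delta_0>0$ depending only on the chosen $y_0$, for $k$ large there is an open wedge $W$ at $q_k$ of angular opening at least $\delta_0/2$ and a fixed radial extent $\rho_0>0$ trapped strictly between $\partial D_k$ and $\partial B_R$. The measure stability statement (4) of Theorem \ref{conv} then forces $W\subset D_k^c$: if a positive-measure portion of $W$ lay in $D_k$ instead of $D_k^c$, it would belong to $D_k\,\Delta\,B_R$ and be of size bounded below independently of $k$, contradicting $|D_k\,\Delta\,B_R|\to 0$.

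To conclude, I would take $E$ to be a slightly shrunken open sub-wedge of $W$, still of positive Lebesgue measure. Given $p\in E$, I draw the chord through $p$ parallel to the tangent of $\partial B_R$ at $p^\star$; by the angular separation $\delta_0$ this chord exits $W$ on both sides into the region that lies inside both $\partial B_R$ (at fixed small scale) and on the ``positivity'' side of $\partial D_k$. A second application of Lemma \ref{FBconv}, at the two endpoints which sit at a definite positive distance from $\partial D_k$, upgrades them to points of $\mathrm{int}(D_k(x_0,y_0))$. The main obstacle is quantitative rather than conceptual: one must coordinate the $o(\rho)$ free-boundary convergence rate from Lemma \ref{FBconv} with the uniform angular gap $\delta_0$, so that at a single sufficiently large $k$ the wedge $E$ has simultaneously positive angular opening and positive radial length; the former is guaranteed by the generic choice of $y_0$ and the latter by choosing the blowup scale $\rho_0$ small enough before sending $k\to\infty$.
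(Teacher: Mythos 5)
Your construction covers only one of the scenarios that the paper has to handle, and the step that reduces to that scenario is unjustified. You assert that there are interface points $q_k\in\partial D_k(x_0,y_0)\cap\{y=0\}$ with $q_k\to p^\star$, citing Theorem \ref{conv} and the one-component property. Those facts give the \emph{existence} of interface points of $\partial D_k$ (the connected set $D_k$ straddles the interface and is bounded), but they say nothing about their location: measure stability does not prevent the free boundary from meeting the interface only at points bounded away from $p^\star$, since the offending region (for example a thin intrusion of the contact set along the interface near $p^\star$, or a noncontact tentacle) can have arbitrarily small measure. The paper's proof is organized precisely around this difficulty: it takes $q_k$ to be the extreme (leftmost) interface point of $\partial D_k$, extracts a convergent subsequence, and argues by cases according to whether the limit lies in $int(B_R^c)$ (ruled out by the density bound, Lemma 1.3 of \cite{AB}), in $B_R$ (where $E$ is built directly from the positive measure of $D_k^c$ in a small ball around $q_k$ together with measure stability, with no blowup analysis at all), or on $\partial B_R$ (where the blowup/angle-condition argument you describe applies, with the cases of Figure \ref{blowups} sorted out). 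Your proposal proves, at best, the third case; without an argument for the second case (and for why the first cannot occur) it does not establish the lemma.

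There is also a quantitative incoherence in the wedge step. You claim a wedge $W$ at $q_k$ with angular opening bounded below \emph{and} a fixed radial extent $\rho_0>0$, trapped between $\partial D_k$ and $\partial B_R$, and you use Theorem \ref{conv}(4) to force $W\subset D_k^c$. But Lemma \ref{FBconv} controls the free boundary only at the blowup scale $\rho=\rho(k)\to 0$, so no $k$-independent radial extent is available; worse, a wedge of fixed positive measure lying between the two boundaries would itself be contained in $D_k\,\Delta\,B_R$ (on one side or the other) and would contradict the very measure stability you invoke. Moreover, the inference ``any positive-measure part of $W$ in $D_k$ lies in $D_k\,\Delta\,B_R$'' is only valid if $W\cap B_R=\emptyset$, whereas the wedge you need to place in $D_k^c$, between the near-$45^\circ$ free-boundary ray and $\partial B_R$, lies \emph{inside} $B_R$; which side of the blowup free boundary is contact is read off from Lemma \ref{FBconv} (and the blowup classification), not from measure stability. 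Both problems disappear if, as in the paper, you take $E$ inside $B_{\rho/2}(q_k)$ at the blowup scale and accept that $|E|$ depends on $k$: the lemma only requires $|E|>0$ for each fixed $k>K$, with no uniformity in $k$.
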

\begin{proof}
As before we will assume $y_0=0$ for simplicity and without loss of generality assume $x_0=0.$  Again it should be clear that the proof will still hold if $y \neq 0.$  Let $q_k:=(\tilde{q}_k,0) \in D_k(0,0)$ such that $\tilde{q}_k \leq p_k$ for all $(p_k,0) \in \partial D_k(0,0)$.  Now take a convergent subsequence of the $q_k$ calling the new sequence $q_k$ again.  We then have three possible cases, $q_k$ converges either to a point in $int(B^c_R)$, $B_R$, or $\partial B_R.$  

\emph{Case I: $q_k$ converges to a point in $int(B^c_R)$.} \\  
However, $q_k$ can not converge to a point in $int(B^c_R)$ due to the lower bound on the density of $D_k(0,0)$ at the point $q_k$, as shown by Lemma 1.3 in \cite{AB}.

\emph{Case II: $q_k$ converges to a point in $B_R$.} \\
Then there exists $\delta>0$ such that $|-R-q_k| > \delta$ for all $k$. Then we have
$$\frac{|D^c_k(0,0)\cap B_{\delta}(q_k)|}{|B_{\delta}(q_k)|}>0 \quad \text{for all k},$$ 
since otherwise we would contradict the definition of $q_k.$  Note that there is no claim that the above positivity be uniform.   Then by choosing $k$ large enough so that $|(D_k(0,0) \, \Delta \, B_R) \cap \{x \leq -R + \delta \}| \leq \frac{1}{100}|\{x \leq -R+\delta \} \cap B_R|$ the choice of $E:=D^c_k(0,0)\cap B_{\delta}(q_k)$ would satisfy the desired properties.  

\emph{Case III: $q_k$ converges to a point in $\partial B_R$.} \\
As in the proof of Theorem \ref{nonconv} we zoom in at $q_k$ and note that our solution converges to one of the blowup limits in Figure \ref{blowups}.  Note that blowups as in Figures \ref{fig1:image3} and \ref{fig1:image6} are not possible here as this would lead to a contradiction to how $q_k$ was defined.  If the blowup is as in Figure \ref{fig1:image9} we can take $E$ to be defined similarly to that in Case II where $\delta$ need only be small enough.  The remaining blowups are all dealt with similarly so we will assume the blowup limit to be that of the form in Figure \ref{fig1:image4}.  Again as in the proof of Theorem \ref{nonconv} we can choose $k$ sufficiently large so that the zoomed in picture with in Figure \ref{nonconfig} is as accurate as we like.  Hence the choice of $E:=\{(x,y) \in D^c_k(0,0) \cap B_{\rho/2}(q_k) \,| \, x>\tilde{q_k} \}$ would satisfy the desired properties.
\end{proof}

\begin{theorem}
Let $D_k(x_0,y_0)$ and $D_{s;k}(x_0,y_0)$ be as above.  If $D_k(x_0,y_0)$ is nonconvex then there exists an $S>0$ so that $D_{s;k}(x_0,y_0)$ is nonconvex for all $s>S.$
\end{theorem}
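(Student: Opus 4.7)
The plan is to combine the analog of Theorem \ref{conv} in the mollified setting (which the author has already pointed out at the start of this section) with the ``thick'' nonconvexity supplied by Lemma \ref{thickcontact}, and then transfer a pointwise witness of failed convexity from $D_k(x_0,y_0)$ to $D_{s;k}(x_0,y_0)$ by a simple translation/union-bound argument.

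First I would invoke the mollified analog of Theorem \ref{conv}(4) to record that
$$\eta(s) := \left|D_{s;k}(x_0,y_0) \, \Delta \, D_k(x_0,y_0)\right| \longrightarrow 0 \qquad \text{as } s \to \infty.$$
Next, apply Lemma \ref{thickcontact} to pick an open set $E \subset D_k^c(x_0,y_0)$ of positive measure, a point $p_0 \in E$, and endpoints $a_0, b_0 \in int(D_k(x_0,y_0))$ with $p_0 = (1-t_0)a_0 + t_0 b_0$ for some $t_0 \in (0,1)$. Shrinking $\delta>0$ I can arrange $B_\delta(p_0) \subset E$ and $B_\delta(a_0) \cup B_\delta(b_0) \subset int(D_k(x_0,y_0))$.

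The core of the argument is then to show that for $s$ large there exists at least one translation $v \in B_\delta(0)$ satisfying $a_0+v,\, b_0+v \in D_{s;k}(x_0,y_0)$ and $p_0+v \notin D_{s;k}(x_0,y_0)$, since collinearity $p_0+v = (1-t_0)(a_0+v) + t_0(b_0+v)$ is preserved under translation and such a $v$ immediately exhibits a chord of $D_{s;k}(x_0,y_0)$ that leaves the set. The three ``bad'' subsets
\begin{align*}
V_a &= \{v \in B_\delta(0): a_0+v \notin D_{s;k}\},\\
V_b &= \{v \in B_\delta(0): b_0+v \notin D_{s;k}\},\\
V_p &= \{v \in B_\delta(0): p_0+v \in D_{s;k}\}
\end{align*}
each translate into a subset of $D_k \, \Delta \, D_{s;k}$ using $B_\delta(a_0), B_\delta(b_0) \subset D_k$ and $B_\delta(p_0) \subset D_k^c$, so each has measure at most $\eta(s)$; hence $|V_a \cup V_b \cup V_p| \leq 3\eta(s) < |B_\delta(0)|$ for $s$ sufficiently large, and any $v$ outside this bad set produces the desired witness.

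The step I expect to be the main obstacle is really the first one: verifying that the measure-stability development of Section 3 carries over verbatim when the piecewise-constant $a^{ij}_k$ is replaced by the mollified $b^{ij}_{s;k}$, since the uniform $L^\infty$ bound of Lemma \ref{UniBdd}, the compactness of Lemma \ref{comp}, and the weak-formulation passage to the limit in Theorem \ref{conv}(3) must all be re-checked with smoothed coefficients; once $\eta(s) \to 0$ is in hand, the Fubini/union-bound selection above is soft.
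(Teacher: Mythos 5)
Your proposal is correct and follows essentially the same route as the paper: invoke the mollified analog of Theorem \ref{conv}(4) to get $|D_{s;k}\,\Delta\,D_k|\to 0$, use Lemma \ref{thickcontact} to produce an open set $E\subset D_k^c$ threaded by segments with endpoints in $int(D_k)$, and then use the smallness of the symmetric difference to select a chord of $D_{s;k}$ that exits the set. Your translation/union-bound selection of the witness $v$ is only a minor (and arguably cleaner) variant of the paper's argument with the balls $B_{\epsilon/2}(p)$, $B_{\epsilon/2}(q_1)$, $B_{\epsilon/2}(q_2)$, so no substantive difference.
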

\begin{proof}
Again assuming $y_0=0$ for simplicity and without loss of generality assume $x_0=0.$  Let $E$ be as in the previous lemma. Then from Lemma \ref{thickcontact} we have for any $p \in E$ there exists points $q_1, \, q_2 \in int(D_k(0,0))$ such that the line containing $q_1$ and $q_2$ also contains $p.$ Then let $\epsilon_0>0$ be the largest value such that $B_{\epsilon_0}(p) \subset E$. Similarly let $\epsilon_1,$ $\epsilon_2>0$ be the largest values such that $B_{\epsilon_1}(q_1),$ $B_{\epsilon_2}(q_2) \subset int(D_k(0,0))$ and define $\epsilon:=min\{ \epsilon_0,\epsilon_1,\epsilon_2 \}$. Then every line connecting points from $B_{\epsilon}(q_1)$ to $B_{\epsilon}(q_2)$ goes through $B_{\epsilon}(p)$ and every point in $B_{\epsilon}(p)$ is contained in a line that crosses  $B_{\epsilon}(q_1)$ and $B_{\epsilon}(q_2).$  

Using the analog of Theorem \ref{conv} for the sets $D_k(0,0)$ and $D_{s;k}(0,0)$ we can pick $s$ large enough to ensure that almost every point in $B_{\epsilon/2}(q_1)$ and $B_{\epsilon/2}(q_2)$ belongs to $D_{s;k}(0,0)$ and to ensure that almost every point in $B_{\epsilon/2}(p)$ belongs to $D^c_{s;k}(0,0).$ Hence, $D_{s;k}(0,0)$ is nonconvex.
\end{proof}
\newsec{Acknowledgments}{Ack}
I would like to thank my advisor Ivan Blank for not only suggesting I look at the mean value sets associated to the operators found in this paper, but also for providing many enlightening conversations and support. I would also like to thank Nathan Albin for sharing some numerical computations related to this problem with me, and Luis Silvestre for providing some useful feedback.

\end{document}